\spnewtheorem*{notn}{Notation}{\bfseries}{\upshape}
\spnewtheorem*{defn}{Definition}{\bfseries}{\upshape}
\spnewtheorem{clm}{Claim}{\bfseries}{\itshape}
\spnewtheorem{cl}{Claim}{\bfseries}{\itshape}
\spnewtheorem{clam}{Claim}{\bfseries}{\itshape}
\crefname{clm}{Claim}{Claims}
\crefname{cl}{Claim}{Claims}
\crefname{case}{Case}{Cases}
\begin{document}

\title{Bounds on Linear Tur\'{a}n Number for Trees}

\author{
Rajat Adak\inst{1}\textsuperscript{(\Letter)}\orcidlink{0009-0001-2723-5550} \and
Pragya Verma\inst{2}\textsuperscript{(\Letter)}\orcidlink{0009-0003-7725-9424}
}

\authorrunning{R. Adak and P. Verma}

\institute{
Department of Computer Science and Automation,  
Indian Institute of Science, Bangalore, India\\
\email{rajatadak@iisc.ac.in}
\and
Department of Computer Science and Engineering,  
Indian Institute of Technology Bombay, India\\
\email{24d0376@iitb.ac.in}
}
\maketitle
\begin{abstract}

A hypergraph $H$ is said to be \emph{linear} if every pair of vertices lies in at most one hyperedge.
Given a family $\mathcal{F}$ of $r$-uniform hypergraphs, an $r$-uniform hypergraph $H$ is said to be \emph{$\mathcal{F}$-free} if it contains no member of $\mathcal{F}$ as a subhypergraph.
The \emph{linear Tur\'{a}n number} $ex_r^{\mathrm{lin}}(n,\mathcal{F})$ denotes the maximum number of hyperedges in an $\mathcal{F}$-free linear $r$-uniform hypergraph on $n$ vertices.

Gy\'arf\'as, Ruszink\'o, and S\'ark\"ozy~[\emph{Linear Tur\'an numbers of acyclic triple systems}, European J.\ Combin.\ (2022)] initiated the study of bounds on the linear Tur\'an number for acyclic $3$-uniform linear hypergraphs.

In this paper, we extend the study of linear Tur\'{a}n numbers for acyclic systems to higher uniformity.
We first give a construction for linear $r$-uniform trees with $k$ edges that yields the lower bound
$
ex_r^{\mathrm{lin}}(n,T_k^r)\ge {n(k-1)}/{r},
$
under mild divisibility and existence assumptions. 
Next, we study hypertrees with four edges. We prove the exact bound
$
ex_r^{\mathrm{lin}}(n,B_4^r)\le {(r+1)n}/{r}
$
and characterize the extremal hypergraph class, where $B_4^r$ is formed from $S_3^r$ by appending a hyperedge incident to a degree-one vertex.
We also prove the bound
$
ex_r^{\mathrm{lin}}(n,E_4^r)\le {(2r-1)n}/{r}
$
for the crown $E_4^r$. 
Finally, we give a construction showing
$
ex_r^{\mathrm{lin}}(n,P_4^r)\ge {(r+1)n}/{r}
$
under suitable assumptions and conclude with a conjecture on sharp upper bound for $P_4^r$.

\end{abstract}
\keywords{Linear Tur\'{a}n Number \and Uniform Linear Hypergraph \and Steiner System \and Expansion}
\section{Introduction}
Extremal graph theory is largely concerned with Tur\'{a}n-type problems, which ask for the maximum number of edges in a graph or hypergraph that avoids a prescribed forbidden substructure. In this work, we focus on a more recent line of research, namely the study of linear Tur\'{a}n numbers. The Tur\'{a}n problem for hypergraphs has been investigated extensively; see, for instance, the surveys \cite{furedi1991turan,furedi2013history,keevash2011hypergraph} and the book \cite{gerbner2018extremal}.

The study of Tur\'{a}n numbers has a long history, beginning with the classical result of Tur\'{a}n (1941) for cliques. Let $ex(n,K_{r+1})$ denote the maximum number of edges in a graph on $n$ vertices that does not contain a copy of $K_{r+1}$ as a subgraph. The Tur\'{a}n graph, denoted by $T(n,r)$, is a complete $r$-partite graph on $n$ vertices whose partite sets are as nearly equal in cardinality as possible (balanced complete $r$-partite).

\begin{theorem}\label{th:Turan}\emph{(Tur\'{a}n \cite{turan1941egy})}
For $n \geq r \geq 1$, $ex(n,K_{r+1}) \leq |E(T(n,r))|$. Equality holds if and only if $G \cong T(n,r)$.
\end{theorem}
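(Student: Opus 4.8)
The plan is to prove Theorem~\ref{th:Turan} by \emph{Zykov symmetrization}, which delivers both the bound and the characterization of equality in one stroke. Fix $n \geq r \geq 1$ and let $G$ be a $K_{r+1}$-free graph on $n$ vertices with the maximum possible number of edges. I would first show that $G$ must be complete multipartite, by proving that the relation ``$u$ and $v$ are equal or non-adjacent'' is an equivalence relation on $V(G)$. Reflexivity and symmetry are immediate, so the only content is transitivity; once it is established, the complement of $G$ is a disjoint union of cliques, i.e.\ $G$ is complete multipartite.

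For transitivity, suppose toward a contradiction that there are vertices $x,y,z$ with $xy\notin E(G)$, $yz\notin E(G)$, but $xz\in E(G)$. The tool is \emph{vertex duplication}: replacing a vertex $v$ by a new vertex whose neighborhood is exactly $N(v)$, so the duplicate is non-adjacent to $v$ and to any other duplicate of $v$. The two facts I would use are that duplication never creates a $K_{r+1}$ (any clique uses at most one of $v$ and a duplicate, and a clique through a duplicate gives an equally large clique through $v$), and that it changes the edge count in a controlled way while keeping the vertex count at $n$. Then I split into cases. If $d(y) < \max\{d(x),d(z)\}$, say $d(y) < d(x)$, delete $y$ and insert a duplicate of $x$: the edge count strictly increases by $d(x)-d(y)\geq 1$, contradicting maximality. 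Otherwise $d(y)\geq d(x)$ and $d(y)\geq d(z)$; delete both $x$ and $z$ and insert two duplicates of $y$. Since $\{x,z\}$ spans exactly the one edge $xz$, the change in edge count is $2d(y)-(d(x)+d(z)-1)\geq 1>0$, again a contradiction. In each case the resulting graph is still $K_{r+1}$-free by the duplication fact applied to $x$ or $y$. Hence the relation is transitive and $G$ is complete multipartite.

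Now suppose $G$ has parts $V_1,\dots,V_t$. Since a complete $t$-partite graph contains $K_t$, $K_{r+1}$-freeness forces $t\leq r$, so it remains to maximize the number of edges over complete multipartite graphs on $n$ vertices with at most $r$ parts. Two elementary moves finish this. First, if $t<r$ then (as $n\geq r>t$) some part has size $\geq 2$; splitting it into two nonempty parts adds at least one edge and keeps the part count $\leq r$, so the optimum uses exactly $r$ parts. Second, if some parts satisfy $|V_i|\geq |V_j|+2$, moving one vertex from $V_i$ to $V_j$ changes the edge count by $(|V_i|-1)-|V_j|\geq 1>0$; hence at the optimum all parts differ in size by at most one, i.e.\ the sizes are $\lceil n/r\rceil$ and $\lfloor n/r\rfloor$. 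This is precisely $T(n,r)$, so $ex(n,K_{r+1})\leq |E(T(n,r))|$, and since an extremal $G$ must survive both moves without loss, equality forces $G\cong T(n,r)$.

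The main obstacle I anticipate is the bookkeeping inside the symmetrization step: one must check in each case that no $K_{r+1}$ is introduced and that the edge count does not drop, being careful about the edge $xz$ that is removed only once in the two-vertex case, and about duplicates being mutually non-adjacent. An alternative I would keep in reserve is induction on $n$: find a $K_r$ in an extremal graph, delete its $r$ vertices, apply the inductive bound to the rest, and count the edges back to the clique; but the equality analysis is fussier there, so I would prefer the symmetrization argument above.
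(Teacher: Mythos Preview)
The paper does not prove Theorem~\ref{th:Turan}; it is stated as historical background with a citation to Tur\'an~\cite{turan1941egy} and no argument is supplied. Your Zykov symmetrization proof is correct and standard: the transitivity case split (including the arithmetic $2d(y)-\bigl(d(x)+d(z)-1\bigr)\ge 1$ in the two-duplicate branch, where the single edge $xz$ is correctly subtracted once), the observation that a complete $t$-partite graph contains $K_t$ so $t\le r$, and the splitting and balancing moves all go through and together pin down the extremal graph as $T(n,r)$.
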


In contrast to graphs, where subgraphs are defined in a unique way, hypergraphs admit several natural notions of containment for a given structure. In particular, the same object (such as a path or cycle) can be defined under different interpretations, including Berge, Minimal and Linear notions. This additional flexibility makes Tur\'{a}n-type problems in hypergraphs significantly more complex. The extension of the Tur\'{a}n problem to hypergraphs has been widely studied. Tur\'{a}n-type results for minimal paths and cycles were obtained by Mubayi and Verstra\"{e}te~\cite{mubayi2007minimal}, while exact results for linear paths (in a uniform hypergraph) were established by F\"{u}redi, Jiang, and Seiver~\cite{furedi2014exact}. For Berge paths, tight results were given by Gy\H{o}ri, Katona, and Lemons~\cite{gyHori2016hypergraph}, with a remaining case resolved in~\cite{davoodi2018erdHos}. In this paper, we focus exclusively on the linear framework.
\subsection{Preliminaries}
In a hypergraph $H = (V(H), E(H))$, $V(H)$ denotes the set of vertices, and $E(H)$ is the set of hyperedges in $H$. For $v \in V(H)$, $d(v)$ denotes the degree of the vertex in $H$, that is, the number of hyperedges incident on $v$. Let $\delta(H)$ denote the minimum vertex degree in $H$, similarly let $\Delta(H)$ be the maximum vertex degree.

$H$ is called $r$-uniform if each $e \in E(H)$ contains exactly $r$ vertices. A hypergraph is \textit{linear} when any two distinct hyperedges intersect in at most one vertex.

\subsection{Linear Tur\'{a}n Number}
For a family $\mathcal{F}$ of linear $r$-uniform hypergraphs, the \textit{linear Tur\'{a}n number}, \linebreak $ex_{r}^{\mathrm{lin}}(n,\mathcal{F})$, denotes the maximum number of hyperedges in an $n$-vertex linear $r$-uniform hypergraph that contains no member of $\mathcal{F}$ as a subhypergraph. When $\mathcal{F}=\{F\}$, we abbreviate this notation to $ex_r^{\mathrm{lin}}(n,F)$.

A key construction in this framework is the \textit{expansion} of a graph. Given a graph $G$, its expansion $G^r$ is obtained by inflating each graph edge into an $r$-element hyperedge using $r-2$ new vertices (kept disjoint across edges). The result is automatically a linear $r$-uniform hypergraph.

The term \textit{linear Tur\'{a}n number} was introduced by Collier-Cartaino, Graber, and Jiang~\cite{COLLIER-CARTAINO_GRABER_JIANG_2018}, who established foundational bounds for linear cycles. Yet the phenomenon predates the terminology: the celebrated work of Ruzsa and Szemer\'{e}di~\cite{ruzsa1978triple} can be stated as
$$n^{2-\frac{c}{\sqrt{\log{n}}}}\le ex^{\mathrm{lin}}_{3}(n,C^3_3)=o(n^2),$$
where $c>0$ and $C_3^3$ denotes the $3$-uniform expansion of the $C_3$ ($3$-edge cycle).

The systematic study of acyclic structures in this context was initiated by Gy\'arf\'as, Ruszink\'o, and S\'ark\"ozy~\cite{gyarfas2022linear}, who investigated acyclic triple systems and obtained several bounds for $3$-uniform linear expansions. Zhang and Wang~\cite{zhang2025linear} extended this direction to the $4$-uniform setting and proved corresponding bounds for a variety of forbidden configurations.

The current picture continues to rapidly develop. Zhou and Yuan~\cite{zhou2025turan} derived general $r$-uniform bounds, including for $P_k^r$, showing $ex^{\mathrm{lin}}_r(n,P_k^{r}) \leq \frac{(2r-3)kn}{2}$ for $r\ge3, k\ge4$. Complementing this line, Khormali and Palmer~\cite{khormali2022turan} and Zhang et al.~\cite{zhang2025turan} investigated star-forests, establishing bounds for sufficiently large $n$.

For linear hypertrees on four hyperedges, the setting already becomes rich. Expansions of trees yield several distinct configurations. For $k=2$, the only possibility is $P_2^r$ (where $P_k$ denotes a path on $k$ edges). It is easy to check that $ex_r^{\mathrm{lin}}(n,P_2^r) = \lfloor\frac{n}{r}\rfloor$. For $k=3$, we obtain precisely $P_3^r$ and $S_3^r$ (where $S_k$ denotes a star on $k$ edges). In \cite{zhang2025linear}, it was shown that $ex_r^{\mathrm{lin}}(n,P_3^r) \leq n$. When $k=4$, there are three expansions: $P_4^r$ , $S_4^r$, and $B_4^r$, where $B_4$ is the tree obtained by attaching an extra edge to a leaf of $S_3$.

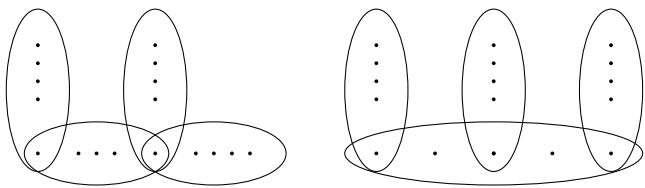
\begin{figure}[ht]
\centering
\begin{tikzpicture}[scale=0.5]

\begin{scope}[xshift=0cm]

\draw (0,0) ellipse (0.7 and 1.8);
\draw (2.6,0) ellipse (0.7 and 1.8);
\draw (1.3,-1.4) ellipse (1.6 and 0.7);
\draw (3.9,-1.4) ellipse (1.6 and 0.7);

\foreach \y in {1,0.6,0.2,-0.2,-1.4}
  \fill (0,\y) circle (1.2pt);

\foreach \y in {1,0.6,0.2,-0.2,-1.4}
  \fill (2.6,\y) circle (1.2pt);

\foreach \x in {0.9,1.3,1.7}
  \fill (\x,-1.4) circle (1.2pt);

\foreach \x in {3.5,3.9,4.3,4.7}
  \fill (\x,-1.4) circle (1.2pt);

\end{scope}

\begin{scope}[xshift=7.5cm]

\draw (0,0) ellipse (0.7 and 1.8);
\draw (2.6,0) ellipse (0.7 and 1.8);
\draw (5.2,0) ellipse (0.7 and 1.8);
\draw (2.6,-1.4) ellipse (3.3 and 0.7);

\foreach \x in {0,2.6,5.2} {
  \foreach \y in {1,0.6,0.2,-0.2,-1.4}
    \fill (\x,\y) circle (1.2pt);
}

\foreach \x in {1.3,2.6,3.9}
  \fill (\x,-1.4) circle (1.2pt);

\end{scope}
\end{tikzpicture}
\caption{Configurations of $B_4^r$ and $E_4^r$ (in the figure $r =5$).}
\end{figure}\vspace{-10pt}
Alongside these expansions, there is a notable non-expansion configuration known as the \emph{crown} $E_4^r$, consisting of three pairwise disjoint hyperedges together with a fourth hyperedge, called the \emph{base}, that intersects each of the other three.
Crowns have been studied in a variety of settings in~\cite{carbonero2021crowns,carbonero2022crowns,tang2022turan,zhang2025generalized}. In a related direction, Fletcher~\cite{fletcher2021improved} obtained the bound $ex_3^{\mathrm{lin}}(n,E_4^3)<\frac{5n}{3}$ for the $3$-uniform case. More generally, let $T_k^r$ denote a linear $r$-uniform hypertree on $k$ hyperedges, that is, a connected, \textit{acyclic} $r$-uniform hypergraph in which any two hyperedges intersect in at most one vertex. Here, \textit{acyclic} means that there is no sequence of distinct hyperedges $e_1,\dots,e_t$ ($t\ge3$) with $e_i\cap e_{i+1}\neq\emptyset$ for all $i$ (indices modulo $t$) and all these intersections are distinct. Similar to $E_4^r$, such a hypertree need not arise as an expansion configuration, depending on $k$.

Finding upper bounds on the Tur\'an number of trees is already very difficult even in the graph setting (see the Erd\H{o}s--S\'os conjecture~\cite{erdos1965extremal}, 1963). Therefore, we focus on small hypertrees.

\subsection{Steiner Systems}

A Steiner system $S(t,r,n)$ consists of an $n$-vertex set together with blocks of size $r$ such that every $t$-subset lies in exactly one block. In particular, $S(2,r,n)$ repeatedly emerges in the study of linear Tur\'{a}n extremal problems, as its block count realizes the maximum possible edge density of a linear $r$-uniform hypergraph on $n$ vertices. This connection is not new: Steiner triple systems $S(2,3,n)$ (also denoted by $STS(n)$) have already appeared in the study of linear Tur\'{a}n numbers for triple systems in \cite{gyarfas2022linear,gyarfas2021turan,gyarfas2022linearwicket,sarkozy2023turan}, while Steiner quadruple systems $S(2,4,n)$ (denoted by $SQS(n)$) arise naturally in the $4$-uniform setting, for instance in \cite{zhang2025linear}. These systems serve as extremal or near-extremal constructions in several linear Tur\'{a}n-type problems.

Note that $S(2,r,n)$ may not exist for all values of $r$ and $n$; we return to this point after the following lemma.

\begin{lemma}\label{steinercount}
Let $H$ be a Steiner system $S(2,r,n)$ \emph{(}assuming it exists\emph{)}.
Then
\[
|E(H)|=\frac{\binom{n}{2}}{\binom{r}{2}}=\frac{n(n-1)}{r(r-1)},
\qquad\text{and}\qquad
d(v)=\frac{n-1}{r-1}\ \ \text{for every }v\in V(H).
\]
\end{lemma}

\begin{proof}
Each edge covers $\binom{r}{2}$ distinct vertex-pairs, and by the definition, we get that every pair
among the $\binom{n}{2}$ pairs is covered exactly once. Hence
$|E(H)|=\binom{n}{2}/\binom{r}{2}$.

For the degree: fix $v\in V(H)$. The $(n-1)$ pairs $\{v,u\}$ ($u\neq v$) must each be covered
exactly once. Each edge containing $v$ covers exactly $(r-1)$ such pairs, so
$d(v)=\frac{(n-1)}{(r-1)}$.
\qed\end{proof}

Note that the divisibility conditions implicit in Lemma~\ref{steinercount} are necessary for the existence of a Steiner system $S(2,r,n)$. Indeed, the degree condition $d(v)=\frac{n-1}{r-1}$ requires that $(r-1)\mid(n-1)$, and the edge count $|E(H)|=\frac{\binom{n}{2}}{\binom{r}{2}}$ requires that $\binom{r}{2}\mid\binom{n}{2}$. It is known that these necessary conditions are also sufficient for all sufficiently large $n$~\cite{keevash2014existence}. 
\newline For small parameters, existence is characterized in several cases: $S(2,3,n)$ exists if and only if $n \equiv 1,3 \pmod{6}$, and $S(2,4,n)$ exists if and only if $n \equiv 1,4 \pmod{12}$. Moreover, if $r$ is a prime power, Steiner systems of the form $S(2,r,r^2)$ (corresponding to affine planes of order $r$) are known to exist. In general, however, existence of $S(2,r,r^2)$ is not guaranteed. The Bruck--Ryser--Chowla theorem \cite{bruck1949nonexistence,chowla1950combinatorial} provides necessary conditions (when $r \equiv 1 \text{ or } 2$ (mod $4$)), ruling out certain values of $r$ (for example, $r=6$).

However, a complete characterization of the existence of $S(2,r,n)$ for arbitrary parameters $(r,n)$ is not known.

\section{Our Results}

\noindent
We begin by determining the exact extremal value for linear stars.
\begin{proposition}\label{starprop}
For $r \geq 3$, $ex_r^{\mathrm{lin}}(n , S_k^r) \leq \dfrac{n(k-1)}{r}$. Equality holds if and only if the linear $r$-uniform hypergraph is $(k-1)$-regular, given it exists.    
\end{proposition}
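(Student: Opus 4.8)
The plan is to reduce the forbidden-subhypergraph condition to a bound on the maximum degree, and then close the argument with a handshake count. The key observation is that $S_k^r$, the $r$-uniform expansion of the star $K_{1,k}$, is simply a collection of $k$ hyperedges through a common vertex (the center) that are otherwise pairwise disjoint. In a linear $r$-uniform hypergraph $H$, any two hyperedges meeting at a vertex $v$ intersect in exactly $\{v\}$, since linearity forbids a larger intersection; hence the $d(v)$ hyperedges incident to any vertex $v$ automatically form a copy of $S_{d(v)}^r$. Consequently, $H$ contains $S_k^r$ as a subhypergraph if and only if some vertex of $H$ has degree at least $k$ (the image of the center witnesses one direction, and the observation above gives the other), so $H$ is $S_k^r$-free if and only if $\Delta(H) \le k-1$.

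With this characterization in hand, I would finish by double counting incidences: summing degrees over vertices counts each hyperedge $r$ times, so $\sum_{v \in V(H)} d(v) = r\,|E(H)|$. When $H$ is $S_k^r$-free, every term on the left is at most $k-1$, giving $r\,|E(H)| \le (k-1)n$ and hence $|E(H)| \le n(k-1)/r$, as claimed.

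For the equality characterization, note that equality in $\sum_{v} d(v) \le (k-1)n$ holds if and only if $d(v) = k-1$ for every vertex, i.e. $H$ is $(k-1)$-regular; conversely, any $(k-1)$-regular linear $r$-uniform hypergraph on $n$ vertices has maximum degree $k-1 < k$ and is therefore $S_k^r$-free while meeting the bound (such a hypergraph can exist only when $r \mid (k-1)n$, which is the force of the ``given it exists'' proviso). I do not expect any real obstacle here; the only step that needs a moment's care is verifying that $k$ hyperedges through a common vertex of a \emph{linear} hypergraph genuinely realize $S_k^r$ rather than some degenerate overlapping configuration, and linearity makes this immediate.
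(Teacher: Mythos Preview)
Your proposal is correct and follows essentially the same route as the paper: translate ``$S_k^r$-free'' into the degree condition $\Delta(H)\le k-1$, then apply the handshake identity $\sum_v d(v)=r|E(H)|$ to obtain the bound and the regularity characterization of equality. If anything, you are slightly more explicit than the paper in arguing that linearity forces any $k$ hyperedges through a common vertex to realize $S_k^r$; the paper simply asserts $d(v)\le k-1$ and moves on.
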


To complement this, we obtain a matching lower bound for general linear hypertrees under mild divisibility and existence assumptions, via block constructions arising from Steiner systems.

\begin{theorem}\label{lowerbound}
    Let $r\ge 3$ and $k\ge 2$. Let $t=(r-1)(k-1)+1$.
Assume that $t\mid n$ and that the Steiner system $S(2,r,t)$ exists.
Then,
$ex^{\mathrm{lin}}_r(n, T^r_k)\ \ge\ \frac{n(k-1)}{r}$.
This bound is sharp when $T_k^r$ is $S_k^r$.
\end{theorem}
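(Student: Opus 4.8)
The plan is to build an explicit $T_k^r$-free linear $r$-uniform hypergraph on $n$ vertices with exactly $\frac{n(k-1)}{r}$ hyperedges by gluing together small ``blocks,'' each of which is a copy of a Steiner system $S(2,r,t)$ on $t=(r-1)(k-1)+1$ vertices. First I would partition the $n$ vertices (using $t\mid n$) into $n/t$ groups of size $t$, and on each group place a copy of the assumed Steiner system $S(2,r,t)$. By Lemma \ref{steinercount}, each such block contributes $\frac{t(t-1)}{r(r-1)}$ hyperedges, and since $t-1=(r-1)(k-1)$ this is exactly $\frac{t(k-1)}{r}$; summing over all $n/t$ blocks gives $\frac{n(k-1)}{r}$ hyperedges, matching the target. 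The resulting hypergraph $H$ is linear: within a block linearity is inherited from the Steiner system, and hyperedges from different blocks are vertex-disjoint, so no pair of vertices lies in two hyperedges.

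The heart of the argument is showing $H$ is $T_k^r$-free. The key structural observation is that every connected component of $H$ is contained in a single block, hence spans at most $t=(r-1)(k-1)+1$ vertices. On the other hand, any linear $r$-uniform tree $T_k^r$ with $k$ hyperedges spans exactly $k(r-1)+1$ vertices (start with $r$ vertices in the first hyperedge and add $r-1$ new vertices with each subsequent hyperedge, using acyclicity and linearity). Comparing, $k(r-1)+1 > (r-1)(k-1)+1 = t$, so $T_k^r$ has strictly more vertices than any component of $H$ can accommodate; therefore $H$ contains no copy of $T_k^r$ as a subhypergraph. This establishes $ex^{\mathrm{lin}}_r(n,T_k^r)\ge \frac{n(k-1)}{r}$.

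I expect the main obstacle to be less in the core counting—which is essentially bookkeeping with Lemma \ref{steinercount}—and more in the precise justification that a linear $r$-uniform hypertree with $k$ edges has exactly $k(r-1)+1$ vertices, and in stating cleanly what ``$T_k^r$ as a subhypergraph'' means so that the vertex-count comparison is a valid obstruction. One should note that $T_k^r$ here ranges over \emph{all} linear $r$-uniform expansions of $k$-edge trees (so the bound is a uniform lower bound over that whole family), and verify that the vertex count $k(r-1)+1$ is independent of which tree is expanded; this follows by induction on $k$, peeling off a leaf hyperedge (one incident to a degree-one vertex of the underlying tree), which in a linear expansion meets the rest of the hypertree in exactly one vertex and thus removes exactly $r-1$ vertices. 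Finally, the sharpness claim when $T_k^r = S_k^r$ is immediate from Proposition \ref{starprop}, since each block, being $(k-1)$-regular by the degree formula $d(v)=\frac{t-1}{r-1}=k-1$ in Lemma \ref{steinercount}, forces the matching upper bound $\frac{n(k-1)}{r}$.
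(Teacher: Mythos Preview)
Your proposal is correct and mirrors the paper's proof almost exactly: partition into $n/t$ copies of $S(2,r,t)$, count edges via Lemma~\ref{steinercount}, and rule out any $T_k^r$ by the vertex-count inequality $k(r-1)+1>t$, with sharpness from Proposition~\ref{starprop}. The paper presents the vertex-count fact as a separate claim with the same inductive justification you give, so there is no meaningful difference in approach.
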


Next, we turn to hypertrees on four hyperedges. For the configuration $B_4^r$, we obtain a sharp extremal characterization.

\begin{theorem}\label{b4thm}
    For $r \geq 3$, $ex_r^{\mathrm{lin}}(n,{B_4^r}) \leq \dfrac{(r+1)n}{r}$. Equality holds if and only if the linear $r$-uniform hypergraph is the union of disjoint Steiner systems $S(2, r, r^2)$, given that the Steiner system $S(2,r,r^2)$ exists.
    \end{theorem}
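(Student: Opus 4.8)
The plan is to read off both the bound and the extremal characterization from the degree sequence of a $B_4^r$-free linear $r$-uniform hypergraph $H$, after first checking that the claimed extremal objects qualify. By Lemma~\ref{steinercount}, $S(2,r,r^2)$ has $r^2$ vertices, $r(r+1)$ edges and is $(r+1)$-regular, so any disjoint union of copies has edge-to-vertex ratio exactly $(r+1)/r$. Since $B_4^r$ is connected, a copy in such a union would lie in one copy of $S(2,r,r^2)$; to rule that out, suppose a center $c$ with blocks $e_1,e_2,e_3$ and a fourth block $e_4$ meeting $e_3$ at some $v\ne c$ formed a $B_4^r$. Then $c\notin e_4$ (else $e_3,e_4$ share two points), each of the $r-1$ vertices of $e_4\setminus\{v\}$ lies on a distinct block through $c$ other than $e_3$, and $e_4$ meets no further block at $c$; hence $e_4$ is disjoint from exactly one of the $r$ blocks through $c$ besides $e_3$, whereas $B_4^r$ needs two such disjoint blocks $e_1,e_2$ — a contradiction.

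The engine of the upper bound is a local claim: if $d(c)\ge r+2$ then every neighbour of $c$ has degree $1$. Indeed, if $v\in g\ni c$ also lies on an edge $f\ne g$, then $c\notin f$ by linearity, and the $r-1$ vertices of $f\setminus\{v\}$ lie on at most $r-1$ distinct edges through $c$, none equal to $g$; hence $f$ is disjoint from at least $(d(c)-1)-(r-1)\ge 2$ of the edges through $c$ other than $g$. Taking two of them as $e_1,e_2$ and $e_3:=g$ (which meets $f$ exactly at $v$) embeds $B_4^r$, a contradiction. So any vertex of degree $\ge r+2$ sits in a component that is a star $S_m^r$, for which $r|E|=mr<(r+1)(m(r-1)+1)=(r+1)|V|$ when $r\ge 3$; every other component has maximum degree $\le r+1$, so there $r|E|\le (r+1)|V|$. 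Summing over components gives $r|E(H)|\le(r+1)n$, i.e.\ $|E(H)|\le(r+1)n/r$, with equality forcing $H$ to contain no star component and to be $(r+1)$-regular.

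It remains to identify the $(r+1)$-regular extremal hypergraphs. Fix $c$ and let $N[c]$ be the union of its $r+1$ edges, a set of $r^2$ vertices. For a neighbour $v\in e_0\ni c$ and a second edge $f$ through $v$, the disjointness count above shows $f$ misses at least one of the $r$ edges $e_1,\dots,e_r$ at $c$, while $B_4^r$-freeness forbids it from missing two; so $f$ meets exactly $r-1$ of them, necessarily at $r-1$ distinct vertices of the $(r-1)$-set $f\setminus\{v\}$, which forces every vertex of $f$ into $N[c]$. Hence no edge leaves $N[c]$: it is a connected component, $(r+1)$-regular on $r^2$ vertices, so it has $r(r+1)$ edges, and since $r(r+1)\binom r2=\binom{r^2}{2}$ linearity forces every pair in $N[c]$ to lie in some edge, i.e.\ $H[N[c]]\cong S(2,r,r^2)$. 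Thus $H$ is a disjoint union of such systems.

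I expect the main obstacle to be exactly this last tightness step. The count that traps $f$ inside $N[c]$ has no margin: one must argue that $f$ is pinned to meet \emph{precisely} $r-1$ of the remaining edges at $c$ and to spend \emph{all} of its free vertices doing so, so that any deviation immediately forces either a second common vertex of two edges (violating linearity) or a second block at $c$ disjoint from $f$ (producing a $B_4^r$). Getting the bookkeeping of intersections right — especially keeping track of which edges at $c$ are hit and that distinct vertices of $f$ hit distinct edges — is where the care is needed.
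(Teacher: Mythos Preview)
Your proof is correct and rests on the same combinatorial core as the paper's: the observation that if some vertex $c$ has degree at least $r+2$ and a neighbour $v$ of $c$ lies in a second edge $f$, then $f$ can meet at most $r-1$ of the other edges through $c$, leaving two disjoint ones and hence a copy of $B_4^r$; and, for the characterization, the pair-count $r(r+1)\binom{r}{2}=\binom{r^2}{2}$ that pins each component to $S(2,r,r^2)$.

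The packaging differs in a way worth noting. The paper proves the inequality by taking a minimal counterexample, using minimality to force $\delta(H)\ge 2$, then deriving $\Delta(H)\ge r+2$ by averaging and reaching a contradiction via the embedding above. You instead prove the stronger structural statement that any vertex of degree $\ge r+2$ has all neighbours of degree~$1$, so its component is a star $S_m^r$, and then bound each component directly: star components satisfy $r|E|<(r+1)|V|$ strictly, the remaining components have $\Delta\le r+1$ and hence $r|E|\le (r+1)|V|$. This avoids the minimal-counterexample scaffolding entirely and yields the equality condition (no star components, $(r+1)$-regularity) as an immediate byproduct. Your argument for the ``only if'' direction is also slightly sharper than the paper's: rather than assuming an edge leaves $N[c]$ and deriving a contradiction, you show directly that every second edge through a neighbour of $c$ must spend all of its $r-1$ free vertices hitting $r-1$ of the remaining $r$ edges at $c$, hence stays inside $N[c]$. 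The two proofs are equivalent in substance, but yours is a cleaner, contradiction-free route to the same endpoint.
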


For the crown hypergraph $E_4^r$, we obtain the following upper bound.

\begin{theorem}\label{E4thm}
        For $r \geq 3$, $ex_r^{\mathrm{lin}}(n,E_4^r) \leq \dfrac{(2r-1)n}{r}$.
    \end{theorem}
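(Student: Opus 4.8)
The plan is to run a discharging-style argument on the hyperedges of an $E_4^r$-free linear $r$-uniform hypergraph $H$, showing that if $H$ has more than $(2r-1)n/r$ hyperedges then some hyperedge is forced to be the base of a crown. Equivalently, I would argue that $\sum_{v} d(v) = r|E(H)| \le (2r-1)n$, so it suffices to show the average degree is at most $2r-1$; the natural way to do this is to find, for every hyperedge $e$, enough vertices of low degree, or to bound things edge-locally. Concretely, call a vertex \emph{heavy} if $d(v)\ge 3$ and \emph{light} otherwise. The key structural observation is: if $e=\{v_1,\dots,v_r\}$ is a hyperedge and three of its vertices, say $v_1,v_2,v_3$, are heavy, then each $v_i$ lies in two hyperedges besides $e$; since $H$ is linear, these hyperedges meet $e$ only in $v_i$, and I can try to pick one hyperedge $f_i\ni v_i$ ($i=1,2,3$) with $f_i\neq e$ such that $f_1,f_2,f_3$ are pairwise disjoint — then $e$ together with $f_1,f_2,f_3$ is a crown, contradicting $E_4^r$-freeness. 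So the real content is a counting/pigeonhole step guaranteeing that a pairwise-disjoint choice exists.

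First I would set up that counting step carefully. Fix $e$ and a heavy vertex $v_1\in e$; it has $d(v_1)-1\ge 2$ hyperedges other than $e$ through it, each using $r-1$ vertices outside $v_1$ and, by linearity, outside $e\setminus\{v_1\}$. The only obstruction to disjointness among the candidate $f_1,f_2,f_3$ is that they share vertices not on $e$. I would bound the number of "bad" choices for $f_i$ (those meeting a previously chosen $f_j$) by the number of vertices in $f_1\cup f_2$, which is at most $2(r-1)$, and each such vertex lies in at most one hyperedge through $v_i$ by linearity; so as long as $d(v_i)-1$ exceeds the number of forbidden hyperedges, a good choice remains. Making this inequality work is exactly where the threshold $2r-1$ should come from, so rather than assuming every vertex of $e$ is heavy I would instead bound, for each edge $e$, the quantity $\sum_{v\in e}(d(v)-1)$ and show it cannot be too large without creating a crown; summing $\sum_{v\in e} d(v)$ over all $e$ double-counts degrees and yields $\sum_v d(v)^2$, which I can then play against $\sum_v d(v)$.

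An alternative, possibly cleaner route I would also try: a direct extremal/deletion argument. Repeatedly delete a vertex of degree at most $2r-1$ together with its incident hyperedges; if we can always do so, then $|E(H)| \le (2r-1)n/r$ by the usual peeling bound (each deletion removes at most $2r-1$ hyperedges and one vertex... more precisely, track $|E|$ against $n$). So suppose for contradiction every vertex has degree $\ge 2r$. Pick any hyperedge $e=\{v_1,\dots,v_r\}$. Each $v_i$ has at least $2r-1$ other hyperedges through it. I would then greedily build a crown: choose $f_1\ni v_1$ disjoint from $e\setminus\{v_1\}$ (automatic by linearity, and $f_1\ne e$); among the $\ge 2r-1$ hyperedges through $v_2$ other than $e$, at most $|f_1\setminus\{v_1\}| = r-1$ can meet $f_1$ (linearity again, one per vertex of $f_1$), leaving $\ge 2r-1-(r-1)=r\ge1$ choices for $f_2$ disjoint from $e$ and from $f_1$; similarly at most $2(r-1)$ hyperedges through $v_3$ meet $f_1\cup f_2$, leaving $\ge 2r-1-2(r-1) = 1$ choice for $f_3$. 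Then $\{e,f_1,f_2,f_3\}$ is a copy of $E_4^r$, a contradiction. This is the cleanest version and I expect it to be essentially the intended proof; the main obstacle is pinning down the peeling inequality precisely (handling the base case and making sure "degree $\le 2r-1$ always exists" genuinely gives $|E(H)|\le (2r-1)n/r$ rather than something off by a constant), and double-checking the linearity bookkeeping in the greedy step so that $f_1,f_2,f_3$ are genuinely pairwise disjoint and each genuinely distinct from $e$.
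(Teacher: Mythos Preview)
Your greedy crown construction is correct and is essentially the paper's key lemma (their Lemma~\ref{crownlem}): if an edge $e$ contains vertices $a,b,c$ with $d(a)\ge 2r$, $d(b)\ge r+1$, $d(c)\ge 2$, then $H$ contains a crown based at $e$. The linearity bookkeeping you describe is exactly right.

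The gap is in your reduction to minimum degree $\ge 2r$. The peeling does not give the bound you want: deleting a vertex of degree $\le 2r-1$ removes one vertex but up to $2r-1$ edges, and iterating yields only $|E(H)|\le (2r-1)n$, not $(2r-1)n/r$. Equivalently, in the minimal-counterexample formulation, removing a vertex $v$ with $d(v)\le 2r-1$ from $H$ need not preserve $|E|>(2r-1)|V|/r$: you would need $(2r-1)n/r-(2r-1)\ge (2r-1)(n-1)/r$, which simplifies to $r\le 1$. So the minimal counterexample only has $\delta(H)\ge 2$, not $\delta(H)\ge 2r$, and your ``off by a constant'' worry is actually off by a factor of~$r$.

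This is why the paper's proof is substantially more delicate than your outline. After securing $\delta(H)\ge 2$, the paper must locate a single edge $e$ on which the asymmetric degree hypotheses of the crown lemma hold. The naive route---show $\sum_v d(v)^2$ is large via Cauchy--Schwarz and hence some edge has $s(e)=\sum_{v\in e}d(v)>r(2r-1)$---fails because a single vertex of huge degree can inflate $s(e)$ for many edges without forcing a second vertex of degree $\ge r+1$ on any of them. The paper handles this by introducing ``large'' vertices (degree $\ge 4r-3$), proving no edge contains two of them, truncating $s(e)$ to a capped $s^*(e)$ on edges touching a large vertex, and then running a careful mass-transfer argument (interpolating between an almost-constant function and the true degree sequence) to show $\sum_e s^*(e)/|E(H)|>r(2r-1)$. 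The edge witnessing this has $s^*(e)$ above the cap, hence carries no large vertex; then $s(e)=s^*(e)>r(2r-1)$ together with $d(v)\le 4r-4$ for all $v\in e$ forces both a vertex of degree $\ge 2r$ and another of degree $\ge r+1$, and the crown lemma applies. Your first sketch (``play $\sum d(v)^2$ against $\sum d(v)$'') is pointed in the right direction, but the large-vertex obstruction is the real content and you have not addressed it.
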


\noindent  
The linear Tur\'{a}n number for $S_4^r$ follows directly from Proposition~\ref{starprop}.  
The upper bound for $P_4^r$ follows from the bound on $P_k^r$ in \cite{zhou2025turan}, and we obtain a lower bound for $P_4^r$ under certain divisibility and existence assumptions.
\begin{theorem}\label{P4thm}
    Assume $r^2\mid n$ and that the Steiner system $S(2,r,r^2)$ exists. Then, $ex_r^{\mathrm{lin}}(n,P_4^r) \geq \dfrac{(r+1)n}{r}$.
\end{theorem}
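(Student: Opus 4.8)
The plan is to exhibit an explicit $P_4^r$-free linear $r$-uniform hypergraph on $n$ vertices with exactly $(r+1)n/r$ hyperedges. Since $r^2 \mid n$, I would partition the vertex set into $n/r^2$ blocks, each of size $r^2$, and place a copy of the Steiner system $S(2,r,r^2)$ on each block (these exist by hypothesis). The hyperedges of $H$ are the union of the blocks of all these Steiner systems, with no hyperedge crossing between blocks. By \Cref{steinercount}, each copy of $S(2,r,r^2)$ has $\binom{r^2}{2}/\binom{r}{2} = r^2(r^2-1)/(r(r-1)) = r(r+1)$ hyperedges, so $|E(H)| = (n/r^2)\cdot r(r+1) = (r+1)n/r$, as required. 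Linearity is immediate since each component is linear and distinct components are vertex-disjoint.

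The heart of the argument is to show $H$ contains no copy of $P_4^r$. A copy of $P_4^r$ is connected, so it must lie entirely within a single Steiner component $S = S(2,r,r^2)$; it therefore suffices to show a single copy of $S(2,r,r^2)$ contains no $P_4^r$. Here I would use the key structural feature of $S(2,r,r^2)$: by \Cref{steinercount} every vertex has degree exactly $(r^2-1)/(r-1) = r+1$. Suppose for contradiction that $e_1, e_2, e_3, e_4$ form a linear path, meeting consecutively in vertices $v_1 = e_1\cap e_2$, $v_2 = e_2\cap e_3$, $v_3 = e_3\cap e_4$, with all other intersections empty and $v_1, v_2, v_3$ distinct. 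The plan is to derive a contradiction by a counting argument on the middle of the path — most naturally by considering the edges $e_2$ and $e_3$ and the vertices they cover, exploiting the fact that in a Steiner system every pair of vertices is covered by a unique block. For instance, for any vertex $u \in e_2 \setminus \{v_1, v_2\}$ and any vertex $w \in e_3 \setminus \{v_1, v_2\}$, the pair $\{u,w\}$ lies in a unique block, and one can try to force enough such blocks through a fixed vertex to exceed the degree bound $r+1$, or alternatively to force a short cycle, contradicting the linear-path structure. I would look for the contradiction at the vertex $v_2$ (or at a vertex of $e_2 \cap e_3$'s neighborhood): the $r+1$ blocks through $v_2$ are highly constrained, and the blocks $e_2, e_3$ together with the blocks joining $v_2$ to the far endpoints of $e_1$ and $e_4$ should overload this count.

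I expect this last step — proving a single $S(2,r,r^2)$ is $P_4^r$-free — to be the main obstacle, since it is a genuine structural fact about Steiner systems rather than a formal manipulation, and the naive degree count at $v_2$ gives exactly $r+1$ without slack, so one must bring in a second vertex or a pair-covering argument to push past the bound. It may be cleaner to argue that the union $e_1 \cup e_2 \cup e_3$ spans $3r - 2$ vertices, and that within $S(2,r,r^2)$ the requirement that $e_4$ meet $e_3$ in a new vertex $v_3$ while avoiding $e_1 \cup e_2$ entirely collides with the fact that every vertex of $e_4$ is already paired (via unique blocks) with the vertices of $e_1 \cup e_2$; tracking where those forced blocks go should yield a repeated pair or an excess degree. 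Once the $P_4^r$-freeness is established, the bound $ex_r^{\mathrm{lin}}(n,P_4^r) \ge (r+1)n/r$ follows immediately from the edge count above, completing the proof. Together with the upper bound from \cite{zhou2025turan}, this pins the answer down up to the multiplicative gap recorded in our concluding conjecture.
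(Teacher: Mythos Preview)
Your construction, edge count, and reduction to a single Steiner component are exactly the paper's. The gap is precisely where you flag it, and it is caused by looking at the wrong vertex: the degree count at $v_2$ really does balance at $r+1$ (the $r$ vertices of $e_1$ give $r$ distinct blocks through $v_2$, one of which is $e_2$, and $e_3$ is the unique remaining block), so no contradiction can appear there.

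The paper instead looks at $v_3=e_3\cap e_4$ and the \emph{far} edge $e_1$. The relevant structural fact is \Cref{lem:steiner} (proved in the $B_4^r$ section): in $S(2,r,r^2)$, for any edge $e$ and any vertex $v\notin e$, exactly one block through $v$ is disjoint from $e$. This is immediate from the count you already sketched---each of the $r$ vertices of $e$ forces, by pair uniqueness, a distinct block through $v$ meeting $e$, leaving exactly one of the $r+1$ blocks through $v$ unaccounted for. Now in a $P_4^r$ we have $e_1\cap e_3=\emptyset$ and $e_1\cap e_4=\emptyset$, so $v_3\notin e_1$, and both $e_3$ and $e_4$ are blocks through $v_3$ disjoint from $e_1$. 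That is two such blocks where \Cref{lem:steiner} allows only one, giving the contradiction in one line. Your second suggested approach (tracking how $e_4$ must pair with $e_1\cup e_2$) is groping toward this, but the clean move is simply to shift attention from $v_2$ to $v_3$.
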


    \section{Proof of \Cref{starprop} and \Cref{lowerbound}}
    \textit{Proof of \Cref{starprop}.}
        If $H$ is $S_k^r$-free, then $d(v) \leq k-1$ for all $v \in V(H)$. Since each edge consists of $r$ vertices, we get an analogue of the handshaking lemma as $\sum_{v \in V(H)}d(v) = r|E(H)|$. Thus,
        $r|E(H)| \leq n(k-1) \implies |E(H)| \leq \dfrac{n(k-1)}{r}$.
        Therefore we get $ex_r^{\mathrm{lin}}(n , S_k^r) \leq \dfrac{n(k-1)}{r}$ and clearly for equality we need $d(v) = k-1$ for all $v \in V(H)$. Note that a linear $r$-uniform $(k-1)$-regular hypergraph may not exist. In such cases, equality is not feasible. Clearly, $r \mid n(k-1)$ is a necessary condition. Also it is necessary to have,
        \[|E(H)|{r \choose 2} \leq {n\choose 2} \implies (k-1)(r-1) \leq (n-1)\]
        These conditions are not sufficient. For example, when $(n, k, r) = (10, 4, 3)$, 
both the above conditions are satisfied, yet no such hypergraph 
exists. In fact, there is no known general characterization for the existence 
of regular linear $r$-uniform hypergraphs.

    Now we give a constructive proof for \Cref{lowerbound} under the divisibility and existence assumptions.
    \vspace{2mm}
    
    \noindent\textit{Proof of \Cref{lowerbound}.}
Let $n = qt$ and partition the $n$-vertex set into $q$ disjoint parts
$V(H) = V_1 \sqcup V_2 \sqcup \cdots \sqcup V_q$,
such that $|V_i|=t$.
Suppose each $V_i$ induces $H_i$, a component of $H$, and each $H_i$ is a copy of $S(2,r,t)$.

Since each $H_i$ is linear and $r$-uniform by definition, and edges of disjoint components are disjoint, we have $H$ to be linear and $r$-uniform.

By \Cref{steinercount}, we have
\[|E(H)| = \sum_{i=1}^q |E(H_i)|
= \frac{n}{t}\cdot \frac{t(t-1)}{r(r-1)}
= \frac{n(t-1)}{r(r-1)} = \frac{n(r-1)(k-1)}{r(r-1)} = \frac{n(k-1)}{r}\]
\begin{claim}
    Every linear $r$-uniform hypertree with $k$ edges has exactly $(r-1)k+1$ vertices.
\end{claim}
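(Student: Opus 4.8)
The plan is to prove the claim by induction on the number of edges $k$, using the tree structure of a linear $r$-tree. First I need to fix the definition implicit in the paper: a linear $r$-tree with $k$ edges is the $r$-uniform expansion of a graph-tree on $k$ edges (equivalently, it is built up edge by edge, each new hyperedge meeting the current structure in exactly one vertex). The base case $k=1$ is immediate: a single hyperedge has exactly $r = (r-1)\cdot 1 + 1$ vertices.

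For the inductive step, suppose every linear $r$-tree with $k-1$ edges has exactly $(r-1)(k-1)+1$ vertices, and let $T$ be a linear $r$-tree with $k$ edges. The key structural fact is that $T$ has a hyperedge $e$ that is a \emph{leaf edge}, i.e. $e$ contains at most one vertex of degree $\ge 2$ in $T$ (this corresponds to a leaf of the underlying tree). Removing $e$ and the $r-1$ vertices of $e$ that have degree $1$ in $T$ yields a linear $r$-tree $T'$ with $k-1$ edges. By the inductive hypothesis $|V(T')| = (r-1)(k-1)+1$, and since we deleted exactly $r-1$ vertices, $|V(T)| = (r-1)(k-1)+1 + (r-1) = (r-1)k+1$, as desired.

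Alternatively — and perhaps cleaner to write — I would argue directly by double counting without induction. Orient the construction: order the hyperedges $e_1, e_2, \dots, e_k$ so that for each $i \ge 2$, the hyperedge $e_i$ intersects $e_1 \cup \cdots \cup e_{i-1}$ in exactly one vertex (such an ordering exists because the underlying graph is a tree, hence connected and acyclic). Then $e_1$ contributes $r$ new vertices and each subsequent $e_i$ contributes exactly $r-1$ new vertices, giving $|V(T)| = r + (k-1)(r-1) = (r-1)k + 1$.

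The main obstacle is not any calculation but pinning down and justifying the structural input: namely that a linear $r$-tree admits such an edge-ordering (or equivalently has a leaf hyperedge whose removal preserves the property). This follows from the fact that the underlying "host" graph $G$ with $V(G)$ = the vertices of positive degree that lie in two hyperedges... actually more simply, from the fact that the intersection pattern of the hyperedges forms a tree, so one can peel off leaves; I would state this as the working definition of linear $r$-tree and reference that expansions of trees have exactly this property by construction. Once that is granted, the counting is a one-line geometric-series-free sum.
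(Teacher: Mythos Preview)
Your proposal is correct, and your second (``cleaner'') argument is essentially identical to the paper's own proof: both order the hyperedges so that each new edge meets the previously built subhypergraph in exactly one vertex, then count $r + (k-1)(r-1) = (r-1)k+1$ vertices. Your additional discussion of leaf-edge removal and of why such an ordering exists is sound but more detailed than what the paper provides, which simply takes this constructive description as the working definition of a linear $r$-tree.
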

\begin{proof}
Starting with one edge, clearly it contains $r$ vertices.
When we add a new edge in a linear $r$-uniform hypertree, it intersects the existing vertex set in exactly one
vertex, hence contributes precisely $r-1$ new vertices.
After adding $k-1$ further edges, the total vertex count is
$r + (k-1)(r-1) = (r-1)k + 1$.
\qed\end{proof}
Suppose there exists a $T^r_k$ in $H$, clearly it must be in some $H_i$. From the above claim $|V(H_i)| \geq (r-1)k +1 = \bigl((r-1)(k-1)+1\bigr) + (r-1) = t + (r-1)$. Thus we get a contradiction.

Therefore, $H$ is a $T^r_k$-free graph and $|E(H)|  = \frac{n(k-1)}{r}$. Thus, under the given divisibility and existence assumptions we have $ex_r^{\mathrm{lin}}(n,T_k^r) \geq \frac{n(k-1)}{r}$. From \cref{starprop} it is clear that the bound is sharp when $T^r_k \cong S^r_k$.  
    \section{Proof of \Cref{b4thm}}
    \subsection{Proof of inequality}\label{ineqb4}
        Suppose the statement in \Cref{b4thm} is false. Assume $H$ to be a minimal counterexample. That is, $H$ is a $B_4^r$-free linear $r$-uniform  hypergraph on $n$ vertices and   
        \[|E(H)| > \frac{(r+1)n}{r}\]
        and for any subhypergraph $H'$ of $H$, \Cref{b4thm} holds for $H'$.
        \begin{claim}
            $\delta(H) \geq 2$.        \end{claim}
            \begin{proof}
                If $H$ contains an isolated vertex, then we can drop that vertex, resulting in a smaller counterexample. Thus, suppose there exists $v \in V(H)$, such that $d(v) =1$. Let $H' = H \setminus \{v\}$. Clearly, \[|E(H')| = |E(H)| -1 > \frac{(r+1)n}{r} -1 > \frac{(r+1)n}{r} - \frac{(r+1)}{r} = \frac{(r+1)(n-1)}{r}\]
                Thus $H'$ is a counterexample for \Cref{b4thm}, contradicting the minimality of $H$.
            \qed\end{proof}

                    $H$ must be connected, otherwise, we will get at least one connected component of $H$ as a counterexample, contradicting minimality of $H$.
        \begin{claim}
            $\Delta(H) \geq (r+2)$.
            \begin{proof}
                Suppose $\Delta(H) \leq (r+1)$, then $d(v) \leq (r+1)$ for all $v \in V(H)$. Therefore, $\sum _{v \in V(H)}d(v) \leq n(r+1)$. Since $H$ is $r$-uniform, we have 
                \[\sum_{v\in V(H)}d(v) = r|E(H)| > (r+1)n\]
                Thus we get a contradiction.
            \qed\end{proof}
        \end{claim}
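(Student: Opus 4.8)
The plan is to argue by contradiction using a single global degree count, exploiting only the $r$-uniformity of $H$ together with the edge-density hypothesis built into the minimal counterexample. Suppose, contrary to the claim, that $\Delta(H) \le r+1$. Then every vertex satisfies $d(v) \le r+1$, so summing this bound over all $n$ vertices of $H$ immediately gives $\sum_{v \in V(H)} d(v) \le (r+1)n$.

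Next I would invoke the hypergraph handshaking identity. Since $H$ is $r$-uniform, each hyperedge contains exactly $r$ vertices and hence contributes exactly $r$ to the total degree sum; counting vertex--edge incidences in the two natural ways therefore yields $\sum_{v \in V(H)} d(v) = r\,|E(H)|$. This is the only structural input needed at this step, and notably it uses uniformity alone, not linearity or $B_4^r$-freeness.

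Finally I would feed in the defining property of the counterexample, namely $|E(H)| > (r+1)n/r$, which after multiplying through by $r$ reads $r\,|E(H)| > (r+1)n$. Chaining the three relations gives $(r+1)n \ge \sum_{v} d(v) = r\,|E(H)| > (r+1)n$, an outright contradiction. Hence the assumption $\Delta(H) \le r+1$ is untenable, and we conclude $\Delta(H) \ge r+2$.

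There is no real obstacle in this argument: it is a clean averaging step, and the only thing to \emph{notice} is that the threshold $r+1$ on the maximum degree is exactly the value at which the degree sum $r\,|E(H)|$ would be forced to fall at or below $(r+1)n$; since the counterexample \emph{strictly} exceeds the density $(r+1)n/r$, a degree cap of $r+1$ is incompatible with it. This is why $r+2$, and not a smaller value, emerges as the correct guaranteed lower bound on $\Delta(H)$, and it is precisely this large-degree vertex that the subsequent case analysis will exploit to locate a forbidden $B_4^r$.
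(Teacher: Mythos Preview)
Your proposal is correct and follows exactly the same approach as the paper: assume $\Delta(H)\le r+1$, bound the degree sum by $(r+1)n$, and contradict the handshaking identity $\sum_v d(v)=r|E(H)|>(r+1)n$ coming from the counterexample hypothesis. The only difference is that you are more explicit about why the threshold $r+1$ is the critical value, which is a helpful clarification but not a departure from the paper's argument.
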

        Let $v \in V(H)$ be such that $d(v) = \Delta(H) = k$. Thus we get an expanded star $S=S_k^r$ centered at $v$. Let the edges of $S$ be $E(S)=\{e_1,e_2.\dots,e_k\}$. Let $u \in e_1\setminus \{v\}$. Since $\delta(H) \geq 2$, there exists $f \in E(H)$ containing $u$ such that $f \neq e_1$. Suppose $f$ intersects $t$ many hyperedges from $E(S)$, without loss of generality let these edges be $\{e_1,e_2,\dots,e_t\}$. Since $H$ is $r$-uniform, $t \leq r$.
        
        Note that, $k \geq r+2$, therefore $t \leq k-2$ and thus $e_{t+1},e_{t+2} \in E(S)$. It is easy to see that the edges $f,e_1,e_{t+1},e_{t+2}$ form a $B_4^r$. Thus we get a contradiction. Therefore, no such counterexample exists.
    \subsection{Characterizing extremal Hypergraphs}
        For characterizing the extremal hypergraphs, we will only focus on the cases when $S(2,r,r^2)$ exists. If $S(2,r,r^2)$ does not exist, then the bound in \Cref{b4thm} is not tight.
    \newline\textbf{$\bullet$ If Part}
    
\noindent Assume $H$ to be a disjoint union of copies of a Steiner system $S(2,r,r^2)$.
Fix one component $C\cong S(2,r,r^2)$ with $|V(C)|=r^2$.
\begin{claim}
    $|E(C)| = r(r+1)$.
\end{claim}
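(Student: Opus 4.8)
The plan is to invoke \Cref{steinercount} directly with the vertex count $n = r^2$. Since $C$ is by assumption a copy of $S(2,r,r^2)$, the lemma applies verbatim: the number of blocks equals $\binom{r^2}{2}/\binom{r}{2} = \frac{r^2(r^2-1)}{r(r-1)}$.

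From here the only work is an algebraic simplification. I would factor $r^2 - 1 = (r-1)(r+1)$, cancel the common factor $r(r-1)$ from numerator and denominator, and obtain $|E(C)| = r(r+1)$. This is a routine calculation with no case analysis and no hidden subtlety.

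While invoking \Cref{steinercount} I would also record the companion fact that every vertex of $C$ has degree $d(v) = \frac{r^2 - 1}{r - 1} = r + 1$, since this regularity of the components will almost certainly be the quantity actually used in the subsequent verification that a disjoint union of copies of $S(2,r,r^2)$ is $B_4^r$-free and meets the bound $|E(H)| = \frac{(r+1)n}{r}$ with equality. If I wanted to keep the claim self-contained I could alternatively re-derive both counts by the same double-counting argument used in the proof of \Cref{steinercount} (each block covers $\binom{r}{2}$ pairs; each pair is covered once), but since that lemma is already available, the cleanest route is simply to cite it.

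The main ``obstacle'' here is essentially nonexistent; the only thing to be careful about is not conflating the existence hypothesis (we are in the regime where $S(2,r,r^2)$ exists, as stated at the start of the characterization) with the counting identity, which holds for any such system whenever it exists. So the proof will be a two-line computation: substitute $n = r^2$ into \Cref{steinercount} and simplify.
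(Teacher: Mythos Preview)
Your proposal is correct and matches the paper's own proof exactly: cite \Cref{steinercount} with $n=r^2$ and simplify $\frac{r^2(r^2-1)}{r(r-1)}=r(r+1)$. The additional remark about $d(v)=r+1$ is fine but not needed for this particular claim.
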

\begin{proof}
    From \Cref{steinercount}, we have \[|E(C)| = \frac{\binom{r^2}{2}}{\binom{r}{2}} = \frac{r^2(r^2-1)}{r(r-1)} = r(r+1) \tag*{\qed}\]
\end{proof}
If $H$ has $t$ components, then $n=t r^2$ and $|E(H)| = t\cdot r(r+1)=\frac{(r+1)n}{r}$.

\begin{lemma}\label{lem:steiner}
    If $C \cong S(2,r,r^2)$, and $e \in E(C)$, then for any $v\in V(C)$ such that $v \notin e$, there exists a unique $f \in E(H)$ containing $v$ such that $f\cap e = \emptyset$.
\end{lemma}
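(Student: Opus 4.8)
The plan is to reduce the statement to a short degree count inside the single component $C$, driven by the Steiner property and linearity. First I would observe that, since $H$ is a disjoint union of copies of $S(2,r,r^2)$ and $v\in V(C)$, every edge of $H$ incident to $v$ already lies in $C$; so ``$f\in E(H)$'' and ``$f\in E(C)$'' mean the same thing here, and it suffices to work entirely within $C$. By \Cref{steinercount}, every vertex of $C$ has degree $\frac{r^2-1}{r-1}=r+1$, so there are exactly $r+1$ edges of $C$ through $v$, and the task becomes: show that exactly $r$ of them meet $e$.

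Next I would classify those $r+1$ edges according to whether they intersect $e$. For each $w\in e$ the pair $\{v,w\}$ lies in a unique block $f_w\in E(C)$. I claim $w\mapsto f_w$ is injective: if $f_w=f_{w'}$ for distinct $w,w'\in e$, then that block would contain both $w$ and $w'$ and hence share the two vertices $w,w'$ with $e$; since $v\in f_w\setminus e$ forces $f_w\neq e$, this violates linearity. The same argument shows each $f_w$ meets $e$ in exactly the one vertex $w$. Conversely, any edge through $v$ meeting $e$ meets it in a single vertex $w$ (linearity again) and is therefore the unique block on $\{v,w\}$, i.e.\ equal to $f_w$. Hence the edges through $v$ that intersect $e$ are precisely $\{f_w : w\in e\}$, and there are exactly $r$ of them.

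Finally, subtracting, exactly $(r+1)-r=1$ of the $r+1$ edges through $v$ is disjoint from $e$, which yields both existence and uniqueness of the desired $f$. The only point requiring care — and the ``main obstacle'', such as it is — is the repeated appeal to linearity to rule out a block through $v$ meeting $e$ in two vertices; once that is in place the rest is a one-line count.
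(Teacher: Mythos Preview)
Your proof is correct and follows essentially the same approach as the paper: compute $d(v)=r+1$ via \Cref{steinercount}, associate to each $w\in e$ the unique block $f_w$ through $\{v,w\}$, argue these $r$ blocks are distinct, and conclude the remaining block through $v$ is disjoint from $e$. Your write-up is in fact a bit more careful than the paper's, making explicit both the injectivity of $w\mapsto f_w$ via linearity and the converse direction that any edge through $v$ meeting $e$ must be one of the $f_w$.
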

\begin{proof}
Fix an edge $e\in E(C)$ and a vertex $v\in V(C)\setminus e$. From \Cref{steinercount} we have $d(v) = r+1$. Hence there are exactly $r+1$ edges of $C$ containing $v$.

For each vertex $u\in e$, the pair $\{u,v\}$ is contained in a unique edge, which we denote by $e_u$. Clearly, if $u\neq u'$, then $e_u\neq e_{u'}$. Hence, the $r$ vertices of $e$ give rise to $r$ distinct edges containing $v$, each intersecting $e$ in exactly one vertex.

Since $v$ lies in exactly $r+1$ edges in total, there exists precisely one further edge $f$ containing $v$ that is different from all the $e_u$. This edge $f$ cannot intersect $e$; otherwise it would coincide with $e_u$ for some $u\in e$. Therefore, $f\cap e=\emptyset$.
\qed\end{proof}

\begin{claim}
    $C$ is $B_4^r$-free.
\end{claim}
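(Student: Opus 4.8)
The plan is to reduce the claim to the uniqueness half of \Cref{lem:steiner}, which has just been established. Suppose, for contradiction, that $C$ contains a copy of $B_4^r$. Unwinding the definition (the expansion of the tree obtained by attaching a pendant edge to a leaf of $S_3$), such a copy consists of a ``center'' vertex $v$ together with three hyperedges $e_1,e_2,e_3\in E(C)$ through $v$ — pairwise meeting only at $v$, which is automatic since $C$ is linear — and a fourth hyperedge $e_4\in E(C)$ that meets $e_1$ in a single vertex $u\neq v$ and is disjoint from $e_2$, from $e_3$, and from $v$. Here $u$ plays the role of the degree-one vertex of $S_3^r$ to which the extra edge is appended, and the disjointness of $e_4$ from $e_2,e_3,\{v\}$ is forced by the tree structure of $B_4$, since $u$ is the only vertex of $S_3^r$ of positive degree that $e_4$ can touch.

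Next I would apply \Cref{lem:steiner} with the edge taken to be $e_4$ and the vertex taken to be $v$: since $v\notin e_4$, there is a \emph{unique} hyperedge of $C$ that contains $v$ and is disjoint from $e_4$. But $e_2$ and $e_3$ are two such hyperedges — each contains $v$, each is disjoint from $e_4$, and they are distinct (being distinct edges of the star at $v$, and both distinct from $e_4$ because $v\in e_2\cap e_3$ while $v\notin e_4$). This contradicts uniqueness, so no copy of $B_4^r$ exists in $C$, proving the claim.

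I do not expect a real obstacle here: the content has already been front-loaded into \Cref{lem:steiner} (which used $d(v)=r+1$ in $S(2,r,r^2)$ and the fact that the $r$ vertices of a fixed edge account for exactly $r$ of the $r+1$ edges through $v$). The only care needed is the bookkeeping in identifying the $B_4^r$-substructure inside $C$, i.e.\ correctly reading off which two star edges are the ``pendant-free'' ones and confirming they are both distinct from, and disjoint from, the appended edge $e_4$; everything else is immediate.
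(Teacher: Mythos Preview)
Your proposal is correct and follows essentially the same approach as the paper: assume a copy of $B_4^r$ in $C$, note that the center $v$ lies outside the appended edge $e_4$, and obtain two distinct edges through $v$ disjoint from $e_4$, contradicting the uniqueness in \Cref{lem:steiner}. The only difference from the paper is cosmetic labeling (the paper has $e_3$ as the star edge meeting $e_4$, you have $e_1$).
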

\begin{proof}
    Suppose there exists a $B_4^r$ in $C$ and assume it consists of edges $e_1,e_2,e_3$ and $e_4$, where $e_1\cap e_2\cap e_3 = \{v\}$ and $e_3\cap e_4 \neq \emptyset$. Clearly $v \notin e_4$ and $e_1,e_2$ contain $v$ and are disjoint from $e_4$. Thus we get a contradiction to \Cref{lem:steiner}.
\qed\end{proof}
Thus, $|E(H)| = \dfrac{(r+1)n}{r}$ and $H$ is $B_4^r$-free.

\noindent\textbf{$\bullet$ Only If Part}

\noindent Now let $H$ be a $B_4^r$-free linear $r$-uniform hypergraph on $n$ vertices satisfying $|E(H)|=\dfrac{(r+1)n}{r}$. Without loss of generality, assume $H$ is connected. We need to show that $H$ is $S(2,r,r^2)$.
\begin{claim}
    $H$ is $(r+1)$-regular
\end{claim}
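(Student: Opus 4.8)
The plan is to show that a connected $B_4^r$-free linear $r$-graph $H$ on $n$ vertices with $|E(H)|=(r+1)n/r$ must be $(r+1)$-regular, which forces $n=r^2$ and $H\cong S(2,r,r^2)$.

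First, I would observe that the average degree is exactly $r+1$, since $\sum_{v}d(v)=r|E(H)|=(r+1)n$. So it suffices to prove $d(v)\le r+1$ for every vertex; equality everywhere follows automatically. To prove $\Delta(H)\le r+1$, I would argue exactly as in the inequality proof of \Cref{b4thm}: if some vertex $v$ had $d(v)=k\ge r+2$, it would span a star $S_k^r$, and using $\delta(H)\ge 2$ (which holds here too, since a degree-$\le 1$ vertex would let us delete it and the edge density would only go up, contradicting $|E(H)|=(r+1)n/r$ being the maximum — more carefully, a degree-$0$ vertex contradicts connectedness, and a degree-$1$ vertex $v$ gives $|E(H\setminus v)|=|E(H)|-1=(r+1)(n-1)/r+1/r>(r+1)(n-1)/r$, violating the already-proved upper bound), we would find an edge $f$ through some $u\in e_1\setminus\{v\}$ meeting at most $r$ of the star edges, hence missing two of them, producing a $B_4^r$. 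So $\Delta(H)\le r+1$, and combined with the average-degree computation, $H$ is $(r+1)$-regular.

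Once regularity is established, I would deduce the vertex count and Steiner structure. From $(r+1)$-regularity and $|E(H)|=(r+1)n/r$ (consistent), pick any edge $e=\{v_1,\dots,v_r\}$. Each $v_i$ lies in $r+1$ edges: $e$ itself plus $r$ others. For the vertex count I would count pairs: the number of vertices at "distance $\le 1$" from $e$. Actually the cleaner route: fix $v\in V(H)$; the $r+1$ edges through $v$ cover $v$ plus $(r+1)(r-1)=r^2-1$ other vertices, all distinct by linearity, so the closed neighborhood of $v$ has size $r^2$. I would then show $H$ has no vertices outside this set, i.e.\ $V(H)$ equals this closed neighborhood, giving $n=r^2$; this should follow from connectedness together with a local argument showing every neighbor's neighborhood is already contained in the set, using $B_4^r$-freeness to prevent edges from "escaping." Finally, with $n=r^2$ and $|E(H)|=(r+1)n/r=r(r+1)=\binom{r^2}{2}/\binom{r}{2}$, every pair of vertices is covered by some edge (a linear $r$-graph covering $\binom{r}{2}$ pairs per edge and having $\binom{n}{2}/\binom{r}{2}$ edges must cover each pair exactly once), so $H$ is a Steiner system $S(2,r,r^2)$.

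The main obstacle I expect is the step showing $n=r^2$, i.e.\ that the closed neighborhood of a single vertex exhausts $V(H)$. Regularity alone does not immediately bound $n$; I need to rule out edges that reach "two steps away" from $v$. The argument should go: if $f$ is an edge meeting the closed neighborhood $N[v]$ but not contained in it, then $f$ meets some edge $e_i\ni v$ in one vertex $w$, and $f$ has a vertex $z\notin N[v]$; since $d(w)=r+1$ and $w$ already lies in $e_i$ and other edges, I'd look for two edges through $v$ disjoint from $f$ to build a $B_4^r$ with center $v$, edges $e_i$ (the one meeting $f$) and $f$ and two others — this is essentially the same configuration forbidden above, so it should close quickly, but getting the incidences exactly right (ensuring the two extra star-edges are genuinely disjoint from $f$) is the delicate bookkeeping. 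After that, connectedness propagates the conclusion to all of $V(H)$.
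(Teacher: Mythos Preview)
Your proof of the claim is correct and follows essentially the same line as the paper: compute that the average degree equals $r+1$, then rule out $\Delta(H)\ge r+2$ by reusing the star-plus-extra-edge argument from the inequality proof to produce a $B_4^r$. The only cosmetic difference is that you first establish $\delta(H)\ge 2$ globally (by deleting a degree-$\le 1$ vertex and overshooting the already-proved upper bound on $n-1$ vertices), whereas the paper instead argues that if every leaf of the star had degree $1$ then connectedness would force $H\cong S_k^r$, contradicting $|E(H)|=(r+1)n/r$; both routes supply the needed degree-$\ge 2$ leaf, and the rest is identical.
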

\begin{proof}
    Since $H$ is $r$-uniform, $\sum_{v\in V(H)} d(v) = r|E(H)| = (r+1)n$. Suppose there exists a vertex $v \in V(H)$ such that $d(v) = k \geq r+2$. Now consider the $S_k^r$ centered at $v$. It is easy to see that there exists a vertex in $S_k^r$, other than $v$, with degree at least $2$, otherwise $H \cong S_k^r$, which will result, $n = k(r-1) +1$ and $|E(H)| = k = \frac{n-1}{r-1} < \frac{(r+1)n}{r}$, a contradiction.
    
    Thus, using the arguments as in the proof of inequality in \cref{ineqb4}, we will get a $B_4^r$ in $H$.

    Thus every vertex has degree at most $r+1$ and the average is $r+1$, forcing
$d(v)=r+1$ for all $v\in V(H)$, i.e.\ $H$ is $(r+1)$-regular.
\qed\end{proof}

Let $v \in V(H)$ and the $r+1$ edges containing $v$ be $e_1,\dots,e_{r+1}$.
By linearity, these edges are pairwise disjoint outside $v$, so the set
\[
X:=\{v\}\ \cup\ \bigcup_{i=1}^{r+1}(e_i\setminus\{v\})
\]
has size, $|X| = 1 + (r+1)(r-1) = r^2$.

\begin{claim}
$V(H)=X$.
\end{claim}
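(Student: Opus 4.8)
The plan is to show that no edge of $H$ incident to a vertex of $X$ can contain a vertex outside $X$; since $H$ is connected and $X\subseteq V(H)$ trivially, this forces $V(H)=X$ and hence $|V(H)|=r^2$. So I would assume for contradiction that $X\subsetneq V(H)$ and, using connectedness, fix an edge $g$ with $g\cap X\neq\emptyset$ but $g\not\subseteq X$ (such an edge must exist: otherwise the edges of $H$ split into those contained in $X$ and those disjoint from $X$, which disconnects the nonempty proper subset $X$ from the rest).

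The heart of the argument is a local claim: if $f\in E(H)$ meets a leaf vertex $u\in e_i\setminus\{v\}$ and $f\neq e_i$, then $f\subseteq X$. To prove it I would first note $v\notin f$, since otherwise $f$ and $e_i$ share both $v$ and $u$, violating linearity. Letting $t$ be the number of star edges $e_j$ that $f$ meets, linearity together with $|f|=r$ and $v\notin f$ give $1\le t\le r$. If $t\le r-1$, then at least $(r+1)-t\ge 2$ of the $e_j$ are disjoint from $f$; any two of them, say $e_a,e_b$ (necessarily with $a,b\neq i$, since $e_i$ meets $f$ at $u$), together with $e_i$ form an $S_3^r$ centered at $v$, and $f$ is attached to this star exactly at the single leaf $u\in e_i\setminus\{v\}$ while avoiding $e_a$ and $e_b$ — so $\{e_i,e_a,e_b,f\}$ is a $B_4^r$, contradicting $B_4^r$-freeness. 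Hence $t=r$: $f$ meets $r$ of the star edges in $r$ distinct vertices, each lying in some $e_j\setminus\{v\}\subseteq X$ (as $v\notin f$), so $f\subseteq X$.

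Granting the local claim, I would finish by a short case analysis on a vertex $w\in g\cap X$. If $w=v$, then $g$ is one of $e_1,\dots,e_{r+1}$, so $g\subseteq X$. If $w\in e_i\setminus\{v\}$ for some $i$, then either $g=e_i\subseteq X$, or $g\neq e_i$ and the local claim gives $g\subseteq X$. In every case $g\subseteq X$, contradicting the choice of $g$; therefore $V(H)=X$.

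The step I expect to be the main obstacle is the $B_4^r$-extraction: one must pick the right three star edges for the $S_3^r$ core and the right pendant edge at a leaf, and check that the pendant meets the core in exactly one vertex that is a leaf of the core. The only genuine input is the pigeonhole count ``$r+1$ edges through $v$, and an edge missing $v$ meets at most $r$ of them, hence at most $r-1$ unless it is swallowed by $X$'', which leaves two untouched star edges — precisely the room a $B_4^r$ needs. The remaining bookkeeping (existence of $g$, the case split on $w$) is routine.
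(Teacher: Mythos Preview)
Your proof is correct and follows essentially the same approach as the paper: pick a crossing edge, note it misses $v$, count how many of the $r+1$ star edges it can meet, and use pigeonhole to find two untouched star edges that together with $e_i$ and the crossing edge form a $B_4^r$. The only cosmetic difference is that you package the counting as a standalone ``local claim'' (any edge $f\neq e_i$ through a leaf of $e_i$ satisfies $f\subseteq X$), whereas the paper argues directly on the crossing edge, using the outside vertex to force $t\le r-1$ immediately; the underlying $B_4^r$-extraction is identical.
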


\begin{proof}
Suppose for contradiction $ V(H)\setminus X \neq \emptyset$.
Since $H$ is connected, there is an edge $f\in E(H)$ meeting $X$ and containing at least one
vertex outside $X$. Choose such an $f$.
Clearly, $v\notin f$.
Let $u\in f\cap X$ (necessarily $u\neq v$). Without loss of generality assume $u \in e_1$.

Now $f$ has size $r$, and contains at least one vertex outside $X$.
So among the remaining vertices of $f$, at most $r-1$ lie in $X$.
Each vertex of $f \cap \{X\setminus\{v\}\}$ lies in \emph{exactly one} of the star edges
$e_1,e_2,\dots,e_{r+1}$, so $f$ can intersect at most $r-1$ of these $r+1$ edges
(including $e_1$). Therefore there exist \emph{two} distinct edges,
say $e_a$ and $e_b$ among $\{e_2,\dots,e_{r+1}\}$, that are disjoint from $f$.

But then the hyperedges, $f,e_1,e_a,e_b$ form a copy of $B_4^r$, contradicting that $H$ is $B_4^r$-free.
\qed\end{proof}

Finally, we show $H$ is a Steiner system $S(2,r,r^2)$.
Because $H$ is linear, any unordered pair of vertices is contained in \emph{at most one} edge.
Therefore, counting vertex-pairs inside edges gives
\[
|E(H)|\binom{r}{2} \le \binom{r^2}{2}.
\]
But substituting $|E(H)|=\dfrac{(r+1)n}{r} = \dfrac{(r+1)r^2}{r}= r(r+1)$ yields equality:
\[
r(r+1)\binom{r}{2}
= r(r+1)\cdot \frac{r(r-1)}{2}
= \frac{r^2(r^2-1)}{2}
= \binom{r^2}{2}.
\]
Hence every pair of vertices in $H$ lies in \emph{exactly one} edge, i.e.\ $H$ is $S(2,r,r^2)$.

\section{Proof of \Cref{E4thm}}
 We begin by outlining the main idea of the proof, and then present the argument in a sequence of steps.

\medskip
\noindent\textbf{Proof idea.}
We argue by contradiction. Assuming the hypergraph has more than $\frac{(2r-1)n}{r}$ edges, we show that the degree distribution forces the existence of an edge whose vertices have large total degree. We then show that such an edge must contain vertices with sufficiently large degrees, which yields a copy of $E_4^r$ with that edge as its base, a contradiction.

\medskip

\noindent\textbf{Step 1:} \textit{Minimal counterexample and basic properties.}
\newline Suppose the statement in \Cref{E4thm} is false. Let $H$ be a minimal counterexample. Thus, $H$ is an $E_4^r$-free linear $r$-uniform hypergraph on $n$ vertices with $|E(H)| > \frac{(2r-1)n}{r}$, and every proper subhypergraph satisfies the bound.

Since $H$ is minimal, we may assume that $\delta(H)\ge 2$. Indeed, if there exists an isolated vertex $v \in V(H)$, we may remove it without affecting the number of edges. Similarly, if there exists a vertex $v \in V(H)$ with $d(v)=1$, then for $H' = H \setminus \{v\}$,
\[
|E(H')| = |E(H)| -1 > \frac{n(2r-1)}{r}-1 > \frac{(n-1)(2r-1)}{r},
\]
contradicting minimality. Hence $\delta(H)\ge 2$.

\medskip

\noindent\textbf{Step 2:} \textit{A local condition forcing a crown.}
\newline We will now identify a configuration that guarantees the presence of $E_4^r$.

\begin{lemma}\label{crownlem}
Let $H$ be a linear $r$-uniform hypergraph. If there exists an edge $e\in E(H)$ containing three vertices $a,b,c$ with
$d(a)\ge 2r$, $d(b)\ge r+1$, and $d(c)\ge 2$, then $H$ contains a copy of $E_4^r$ with base $e$.
\end{lemma}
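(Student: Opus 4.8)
### Proof proposal for Lemma~\ref{crownlem}

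\textbf{Overview of the strategy.} The plan is to build the crown $E_4^r$ greedily, one "petal" at a time, using the edge $e$ as the prescribed base. The three petals must be hyperedges $f_a \ni a$, $f_b \ni b$, $f_c \ni c$ that are pairwise disjoint and each disjoint from $e$ except at the single vertex where it meets $e$. The degree hypotheses are tailored so that each successive choice survives the constraints imposed by the edges already selected: we first pick $f_c$, then $f_b$, then $f_a$, in increasing order of how many forbidden edges we must avoid.

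\textbf{Step 1: choosing $f_c$.} Since $d(c)\ge 2$, there is an edge $f_c\in E_c(H)$ with $f_c\neq e$. By linearity $f_c\cap e=\{c\}$. This is the only constraint at this stage, so any such edge works.

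\textbf{Step 2: choosing $f_b$.} We need $f_b\in E_b(H)$ with $f_b\neq e$, $f_b\cap e=\{b\}$ (automatic by linearity once $f_b\neq e$), and $f_b\cap f_c=\emptyset$. The bad edges through $b$ are $e$ itself together with every edge through $b$ that meets $f_c$. Now $f_c$ has $r$ vertices; for each vertex $w\in f_c$, linearity implies at most one edge contains both $b$ and $w$ (and if $w=b$ this is impossible since $b\notin f_c$, as $b\in e\setminus\{c\}$ and $f_c\cap e=\{c\}$). Hence at most $r$ edges through $b$ meet $f_c$, plus $e$: at most $r+1$ bad edges. Since $d(b)\ge r+1$... here we must be slightly careful: we need a \emph{strict} surplus, so the intended reading is that $e$ is one of the $r$ edges counted as "meeting $f_c$ through $b$" only if $e\cap f_c\neq\emptyset$, which fails since $e\cap f_c=\{c\}$ and $b\neq c$ --- so $e$ is genuinely a separate bad edge and we'd need $d(b)\ge r+2$. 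The cleaner accounting: among the $r$ vertices of $f_c$, one of them is $c$, and the unique edge through $b$ and $c$ might be $e$; if it is, the $e$-badness is absorbed and we have at most $r$ bad edges; if it is not $e$, then $e$ is bad and the edge through $\{b,c\}$ is a \emph{different} bad edge, but then $c$ contributes a bad edge that we were double-counting, so still at most $r$. Either way at most $r$ edges through $b$ are bad, and $d(b)\ge r+1$ leaves a valid choice of $f_b$.

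\textbf{Step 3: choosing $f_a$.} Now we need $f_a\in E_a(H)$, $f_a\neq e$, with $f_a\cap f_b=\emptyset$ and $f_a\cap f_c=\emptyset$. By the same linearity argument as above, the edges through $a$ that meet $f_b$ number at most $r$ (one per vertex of $f_b$), those meeting $f_c$ number at most $r$, and $e$ contributes (absorbing double-counts at the shared vertices $a\in e$, etc., exactly as in Step 2). A crude bound gives at most $2r$ bad edges through $a$, but the sharper count --- using that $a\in e$, $a\in f_b$'s neighbour-structure overlaps with $e$, and that the edge $\{a,\cdot\}$ through the vertex of $f_b$ lying in $e$ (if any) coincides with $e$ --- brings this to at most $2r-1$; since $d(a)\ge 2r$ there remains an edge $f_a$ avoiding everything. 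Then $\{e, f_a, f_b, f_c\}$ is a copy of $E_4^r$ with base $e$.

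\textbf{Main obstacle.} The delicate point is the exact bookkeeping of "bad" edges in Steps 2 and 3: one must be careful not to double-count the base edge $e$ and the edges through the shared vertices $c\in f_c\cap e$ and $b\in f_b\cap e$, since these overlaps are precisely what let the bounds $d(b)\ge r+1$ and $d(a)\ge 2r$ (rather than $r+2$ and $2r+1$) suffice. I would organize this by, at each step, listing the forbidden edges as $e$ plus, for each vertex $w$ of each previously chosen petal, the unique edge containing the current centre and $w$, and then observing that whenever $w\in e$ that unique edge is $e$ itself, which removes the double count. The rest of the argument is routine linearity.
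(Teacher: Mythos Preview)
Your proof is correct and follows exactly the paper's greedy strategy: pick the petal through $c$, then through $b$, then through $a$, bounding the forbidden edges at each step via linearity. The paper's bookkeeping is cleaner than your Step~2 tangle---it simply notes that any edge $g\ni b$ with $g\neq e$ cannot contain $c$ (else $g$ and $e$ share two vertices), so $g\cap f_c\subseteq f_c\setminus\{c\}$, giving at most $r-1$ bad edges among the $d(b)-1\ge r$ non-$e$ edges through $b$, and similarly $2(r-1)$ at the $a$-step; this is precisely the ``double-count absorption'' you identify in your Main Obstacle paragraph, just phrased so the count is immediate.
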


\begin{proof}
Since $d(c)\ge 2$, pick an edge $f\neq e$ containing $c$. By linearity, any edge through $b$ intersects $f$ in at most one of the $r-1$ vertices in $f\setminus\{c\}$. Thus at most $r-1$ such edges intersect $f$, and since $d(b)-1\ge r$, there exists $g\neq e$ through $b$ disjoint from $f$.

Similarly, an edge $h$ through $a$ can intersect $f\cup g$ in at most $2(r-1)$ vertices, but $d(a)-1\ge 2r-1>2(r-1)$, so there exists $h\neq e$ disjoint from both $f$ and $g$.

Thus, $f,g,h$ are pairwise disjoint edges intersecting $e$, giving a copy of $E_4^r$ with base $e$.
\qed
\end{proof}

\noindent\emph{Strategy.}
We will show that such an edge $e$ must exist by a global counting argument.

\medskip
\noindent\textbf{Step 3:} \textit{Measuring edge density via degrees.}
\newline For each edge $e \in E(H)$, define
$s(e) = \sum_{v \in e} d(v).$

\medskip
\noindent\emph{Idea.}
The quantity $s(e)$ measures how \textit{dense} the neighbourhood of $e$ is. Our goal is to show that some edge has very large $s(e)$.

It is easy to observe that
\[
\sum_{e \in E(H)} s(e) = \sum_{e \in E(H)}\left(\sum_{v \in e} d(v)\right) = \sum_{v \in V(H)}\left(\sum_{e \ni v}d(v)\right) =\sum_{v \in V(H)} d(v)^2.
\]

\noindent\textbf{Step 4:} \textit{Controlling large-degree vertices.}
\newline  Set $A = 4r-3$ and $B = r^2+3r-3$. The vertices with degree at least $A$, are called \textit{large vertices}.
Let $L(H) = \{v \in V(H): d(v)\ge A\}$ be the set of large vertices and define
\[
s^*(e)=
\begin{cases}
\min\{s(e),B\}, & \text{if $e$ contains a large vertex},\\
s(e), & \text{otherwise}.
\end{cases}
\]

\medskip
\noindent\emph{Why truncation?}
Edges containing very large-degree vertices can artificially inflate $s(e)$. The truncation $s^*(e)$ controls this effect.

\begin{lemma}\label{onelarge}
If $v\in L(H)$ and $e$ contains $v$, then for all $u\in e\setminus\{v\}$, $d(u)<r+1$.
\end{lemma}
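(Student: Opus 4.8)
The plan is to argue by contradiction: suppose $v\in L(H)$, $e$ is an edge containing $v$, and there is some $u\in e\setminus\{v\}$ with $d(u)\ge r+1$. The strategy is to invoke Lemma~\ref{crownlem} with base $e$, taking $a=v$ (so $d(a)=d(v)\ge A=4r-3\ge 2r$ once $r\ge 3$), $b=u$ (so $d(b)\ge r+1$), and $c$ any third vertex of $e$ chosen so that $d(c)\ge 2$. The only thing that needs checking is that such a vertex $c$ exists and that $r\ge 3$ makes $4r-3\ge 2r$, which it does. Since $H$ has $\delta(H)\ge 2$ (established just before the lemma for the minimal counterexample, and in any case we may restrict attention to that setting), every vertex of $e$ other than $v$ and $u$ has degree at least $2$; as $r\ge 3$, the edge $e$ has at least three vertices, so a suitable $c\in e\setminus\{v,u\}$ exists. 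Then Lemma~\ref{crownlem} produces a copy of $E_4^r$ with base $e$ in $H$, contradicting that $H$ is $E_4^r$-free.

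First I would state the contrapositive setup and fix the candidate vertices $a=v$, $b=u$, $c$. Next I would verify the three degree hypotheses of Lemma~\ref{crownlem}: $d(v)\ge A=4r-3\ge 2r$ (valid since $r\ge 3$), $d(u)\ge r+1$ by assumption, and $d(c)\ge 2$ by $\delta(H)\ge 2$. Then I would simply cite Lemma~\ref{crownlem} to conclude the existence of an $E_4^r$, and hence the contradiction.

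I do not expect any genuine obstacle here — this lemma is essentially a direct specialization of Lemma~\ref{crownlem}, with the role of the "heavy" vertex played by the large vertex $v$ and the threshold $A=4r-3$ chosen precisely so that $A\ge 2r$. The only minor point to be careful about is making explicit the ambient assumption $\delta(H)\ge 2$ (so that $c$ can be chosen), and the trivial inequality $4r-3\ge 2r$ for $r\ge 3$; both are routine. If one wanted to state the lemma fully self-containedly outside the minimal-counterexample context, one could instead require $\delta(H)\ge 2$ as an explicit hypothesis, or observe that a degree-one vertex of $e$ other than $v,u$ could simply be ignored since $r\ge 3$ still leaves room — but in the present proof the $\delta(H)\ge 2$ reduction is already in force, so nothing further is needed.
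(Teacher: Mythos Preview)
Your proposal is correct and follows exactly the same approach as the paper: assume some $u\in e\setminus\{v\}$ has $d(u)\ge r+1$, take $a=v$, $b=u$, and any $c\in e\setminus\{v,u\}$ (which exists since $r\ge 3$ and satisfies $d(c)\ge 2$ since $\delta(H)\ge 2$), then apply \Cref{crownlem} to obtain an $E_4^r$, a contradiction. The paper's proof is essentially the same three-line argument, just without spelling out the inequality $4r-3\ge 2r$.
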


\begin{proof}
        Suppose there exists $u \in e \setminus \{v\}$ such that $d(u) \geq r+1$. Note that $d(v) \geq A \geq 2r$. Let $c \in e\setminus \{u,v\}$. Clearly $d(c) \geq 2$, since $\delta(H) \geq 2$. Thus from \Cref{crownlem} we have a $E_4^r$ in $H$.
\qed
\end{proof}

\medskip
\noindent\textbf{Step 5:} \textit{Bounding truncation error.} \newline Let $E_v(H)$ denote the set of hyperedges in $E(H)$ containing the vertex $v$.
\begin{lemma}\label{lem:diff}
If $v\in L(H)$ and $d(v)=d$, then
$\sum_{e\in E_v(H)} (s(e)-s^*(e)) \le d^2-Ad.$
\end{lemma}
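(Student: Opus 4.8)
The plan is to prove the inequality termwise: I will show that for every edge $e\in E_v(H)$ we have $s(e)-s^*(e)\le d-A$, and then sum over the $d=|E_v(H)|$ edges through $v$. First I would note that since $v\in L(H)$ is large and $v\in e$, the edge $e$ contains a large vertex, so by definition $s^*(e)=\min\{s(e),B\}$; hence $s(e)-s^*(e)=\max\{0,\,s(e)-B\}$ for every such $e$.

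The key input is \Cref{onelarge}. Because $v$ is large and $e\ni v$, that lemma tells us that every other vertex $u\in e\setminus\{v\}$ has $d(u)<r+1$, i.e.\ $d(u)\le r$. Since $|e\setminus\{v\}|=r-1$, this gives
\[
s(e)=d(v)+\sum_{u\in e\setminus\{v\}}d(u)\ \le\ d+r(r-1).
\]
Substituting into the previous expression and using the definition $B=A+r(r-1)$, we obtain $s(e)-s^*(e)\le\max\{0,\,d+r(r-1)-B\}=\max\{0,\,d-A\}=d-A$, where the final equality holds because $d=d(v)\ge A$ (as $v$ is large).

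Finally I would sum this bound over all edges incident to $v$: since $|E_v(H)|=d$,
\[
\sum_{e\in E_v(H)}\bigl(s(e)-s^*(e)\bigr)\ \le\ d\,(d-A)\ =\ d^2-Ad,
\]
which is exactly the claimed bound. There is no genuine obstacle in this argument: the only nontrivial ingredient is \Cref{onelarge}, and the constant $B$ has evidently been calibrated so that $B-A=r(r-1)$ matches precisely the largest total degree the $r-1$ necessarily-small vertices of an edge through a large vertex can contribute; everything else is bookkeeping.
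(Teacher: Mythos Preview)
Your proof is correct and follows essentially the same approach as the paper: bound each term $s(e)-s^*(e)$ by $d-A$ via \Cref{onelarge} and the identity $B=A+r(r-1)$, then sum over the $d$ edges through $v$. The paper's argument is identical in structure and in the ingredients used.
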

\begin{proof}
From \Cref{onelarge}, for each $e\in E_v(H)$, $s(e)\le d+r(r-1)$.
Clearly, for $e \in E_v(H)$, $s^*(e)=\min\{s(e),B\}$. Thus,
\[
s(e)-s^*(e)\le \max\{0,\,s(e)-B\}\le \max\{0,(d+r(r-1))-B\}=\max\{0,d-A\}.
\]
Since $d\ge A$, the maximum equals $d-A$. There are $|E_v(H)|=d$ edges containing $v$, so $\sum_{e\in E_v(H)} (s(e)-s^*(e))\le d(d-A)=d^2-Ad.$ \qed \end{proof}    

This bounds how much truncation reduces the contribution from edges containing large vertices.

\medskip
\noindent\textbf{Step 6:} \textit{Balancing the degree sequence.}
\newline Since $\sum_v d(v)=r|E(H)| > r\frac{n(2r-1)}{r} = n(2r-1)$, and $H$ is minimal, we have $r|E(H)|=n(2r-1)+l$ for some $l\in[r]$, we compare the degree sequence with a near-uniform baseline.

\begin{lemma}\label{lem:transfer}
There exists a sequence of functions $f_0,f_1,\dots,f_k:V(H)\to \mathbb{N}$ and a partition
$V(H)=I\sqcup D$ such that:
\begin{enumerate}
\item For all $v\in V(H)$, $2r-1 \leq f_0(v)\leq 3r-1$ and $\sum_{v} f_0(v)=(2r-1)n+l$;
\item $f_k(v)=d(v)$ for all $v \in V(H)$;
\item For $ i \in [k-1]$, there exist $x_i\in I$, $y_i\in D$ with
$f_i(x_i)=f_{i-1}(x_i)+1$, $f_i(y_i)=f_{i-1}(y_i)-1$, and all other values unchanged;
\item if $v\in D$, then $f_0(v)=2r-1$ 
\end{enumerate}
\end{lemma}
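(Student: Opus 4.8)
The plan is to read the partition straight off the degree sequence, build $f_0$ by a water-filling argument, and then walk from $f_0$ to the true degree function $d(\cdot)$ by greedy unit transfers. I would first set
\[
D:=\{v\in V(H):d(v)\le 2r-1\},\qquad I:=\{v\in V(H):d(v)\ge 2r\},
\]
and note $I\neq\varnothing$, since $\sum_v d(v)=(2r-1)n+l>(2r-1)n$. On $D$ I put $f_0(v):=2r-1$, which builds in condition~(4); on $I$ I set $f_0(v):=2r-1+b(v)$ for a ``boost vector'' $b:I\to\mathbb{Z}_{\ge0}$ with $0\le b(v)\le\min\{d(v)-(2r-1),\,r\}$ and $\sum_{v\in I}b(v)=l$, to be produced next. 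Given such a $b$, condition~(1) is immediate: $2r-1\le f_0(v)\le(2r-1)+r=3r-1$ and $\sum_v f_0(v)=(2r-1)n+l=\sum_v d(v)$. The constraints on $b$ also give $f_0\le d$ on $I$ (because $b(v)\le d(v)-(2r-1)$) and $f_0\equiv 2r-1\ge d$ on $D$, the two comparisons needed for the transfer phase.

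The one real point is that a valid boost vector exists, equivalently that the total capacity $\sum_{v\in I}\min\{d(v)-(2r-1),\,r\}$ is at least $l$; once this holds, the values $b(v)$ can be filled greedily. This is where the bound $l\le r$ is used — it follows from the minimality of $H$, since deleting one edge yields an $n$-vertex $E_4^r$-free subhypergraph and hence $|E(H)|\le\frac{(2r-1)n}{r}+1$. The estimate splits into two cases: if some $v^{\ast}\in I$ has $d(v^{\ast})\ge 3r-1$ then $v^{\ast}$ by itself contributes $r\ge l$; otherwise every $v\in I$ contributes exactly $d(v)-(2r-1)$, so the capacity equals $\sum_{v\in I}(d(v)-(2r-1))$, and since $\sum_{v\in D}(d(v)-(2r-1))\le 0$ while $\sum_{v\in V(H)}(d(v)-(2r-1))=l$, this sum is at least $l$. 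I expect this capacity inequality to be the only step that is not pure bookkeeping, and it is exactly where the prescribed window $[2r-1,3r-1]$ meets the degree sequence; without $l\le r$ an $f_0$ of the required form need not exist.

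Finally I would build $f_1,\dots,f_k$ by iterating one move. Set $k:=\sum_{v\in I}\bigl(d(v)-f_0(v)\bigr)=\sum_{v\in D}\bigl((2r-1)-d(v)\bigr)\ge 0$. As long as the current function $f$ is not $d$, the conservation identity $\sum_v(d(v)-f(v))=0$ together with the invariant ``$f\le d$ on $I$ and $f\ge d$ on $D$'' yields an $x\in I$ with $f(x)<d(x)$ and a $y\in D$ with $f(y)>d(y)$; incrementing $f$ at $x$ and decrementing at $y$ preserves the invariant, keeps every value a positive integer (on $D$ the values stay $\ge d(y)\ge\delta(H)\ge 2$, on $I$ they stay $\ge f_0\ge 2r-1$), and strictly lowers $\sum_{v\in I}(d(v)-f(v))$. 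After $k$ moves this quantity is $0$, which with $f_k\le d$ on $I$ and $f_k\ge d$ on $D$ forces $f_k=d$; recording the pair $(x_i,y_i)$ used at step $i$ gives conditions~(2) and~(3).
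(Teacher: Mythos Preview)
Your proof is correct and follows the same two-phase approach as the paper: build $f_0$ in the window $[2r-1,3r-1]$ by distributing the excess $l$ among high-degree vertices, then reach $d$ via unit transfers that increment on $I$ and decrement on $D$. The only cosmetic differences are that you fix $I$ and $D$ upfront by the degree threshold $2r-1$ (the paper defines $D$ post~hoc as the set of vertices ever decreased) and that you make the capacity check---that the excess $l$ can actually be absorbed, using $l\le r$---explicit, whereas the paper leaves this implicit in its water-filling algorithm.
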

\noindent\emph{Idea.}
We start with an almost regular degree sequence and gradually \textit{unbalance} it to match the actual sequence. Because a sum of squares is minimized when its components are as equal as possible, tracking the exact increase during this process provides a rigorous baseline to lower bound the total edge density via $\sum_v d(v)^2$.
\begin{proof}
Label the vertices in $V(H)$ as $v_1$ through $v_n$ so that $(d(v_i))^n_{i=1}$ is in non-decreasing order. We follow the following algorithm to define $f_0$.
\begin{enumerate}
    \item First set $f_0(v) = 2r-1$ for all $v \in V(H)$.
    \item Set  $R= l$ and $i =n$. 
    \item While $R > 0$,
    \begin{itemize}
            \item Update $f_0(v_i) = min\{(2r-1 + R), d(v_i)\}$ and $R = R - (f_0(v_i) - 2r+1)$ and $i = i-1$. (Note that update to $f_0(v_i)$ happens only when $d(v_i) > f_0(v_i)$)
    \end{itemize}
\end{enumerate}
Clearly we get $2r-1 \leq f_0(v) \leq 2r-1+ l \leq 3r-1$ and $\sum_v f_0(v) = (2r-1)n +l$.

Now we iteratively transform $f_i$ into the degree function $d$ while preserving the total sum. We define $f_i$ for $i >0$, assuming $f_{i-1}$ is already defined. Take the minimal $a \in [n]$ such that $f_{i-1}(v_a) > d(v_a)$ and the maximal $b \in [n]$ such that $f_{i-1}(v_b) < d(v_b)$. If no such $a$ and $b$ exist, then $f_{i-1}=d$, thus take $i-1 = k$. Otherwise, define $f_i(v_a) = f_{i-1}(v_a) - 1,f_i(v_b) = f_{i-1}(v_b) + 1$, and $f_i(v_c) = f_{i-1}(v_c)$ for $c \notin \{a, b\}$. This strictly decreases
$\sum_v |f(v)-d(v)|$, so the process terminates after finitely many steps at $f_k=d$.

Let $D$ be the set of vertices that are ever decreased and $I$ the rest. Any vertex in $D$ cannot
have been among the initially increased vertices (while defining $f_0$), since it ends up requiring a decrease.
Hence $f_0(v)=2r-1$ for all $v\in D$.
\qed\end{proof}

\begin{definition}
Consider a sequence $\{f_i\}_{i=0}^k$ of functions as in \Cref{lem:transfer}.
\begin{itemize}
    \item For $0\leq i \leq k$, define $T_i =\sum_{v}f_i(v)^2$.
    \item For $i \in [k]$, define $\Delta_i = T_i - T_{i-1}$.
    \item For $v \in V(H)$, let $I_v = \{ i \in [k] : f_i(v) \neq f_{i-1}(v)\}$.
    \item For $v \in V(H)$, define $\Delta_v = \sum_{i \in I_v} \Delta_i$.
\end{itemize}    
\end{definition}

\medskip
\noindent\emph{Interpretation.}
The quantity $T_i$ captures how concentrated the degree sequence is at stage $i$. The increments $\Delta_i$ track how this concentration changes, and $\Delta_v$ measures the total contribution of a vertex $v$ to this change.

\begin{lemma}\label{lem:delta}
Let $v\in L(H)$ with $d(v)=d$. Then
$\Delta_v\;\ge\; d^2-Ad + (r-2)(3r-1)$.
\end{lemma}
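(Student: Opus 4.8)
The plan is to track how the potential $T_i=\sum_v f_i(v)^2$ changes along the transfer sequence, focusing only on the steps that touch a fixed large vertex $v$ with $d(v)=d$. First I would recall that $v$ starts at some value $f_0(v)$ and ends at $f_k(v)=d(v)=d$. Since $v\in L(H)$ and $d\ge A\ge 2r$, while $f_0(v)\le 3r-1<2r+\dots$, in fact $f_0(v)<d$ (one should check $3r-1<A=4r-3$, i.e.\ $r>2$, which holds), so $v$ is only ever \emph{increased} — it belongs to $I$, never to $D$, and each step in $I_v$ raises its value by exactly $1$. Thus the values of $f_i(v)$ over $i\in\{0\}\cup I_v$ run through the integers $f_0(v), f_0(v)+1,\dots,d$.

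Next I would compute $\Delta_v$ as a telescoping-type sum. When step $i\in I_v$ increases $f(v)$ from some value $a$ to $a+1$, it simultaneously decreases some $f(y_i)$ from $b$ to $b-1$, so $\Delta_i = \bigl((a+1)^2-a^2\bigr) + \bigl((b-1)^2-b^2\bigr) = (2a+1) - (2b-1) = 2a - 2b + 2$. The term $2a$ summed over the run $a=f_0(v),\dots,d-1$ contributes $d^2-f_0(v)^2$ (again telescoping $\sum (2a+1)$, minus the count). For the $y_i$ terms I would use that each $y_i$ lies in $D$, hence by \Cref{lem:transfer}(4) satisfies $f_0(y_i)=2r-1$, and every value $b$ of $f(y_i)$ encountered during a decrease satisfies $2r-1\ge b$... more carefully, $b\le f_0(y_i)=2r-1$ at the moment of any decrease, but actually since values only move by $1$ and $y_i$ never increased, $f_{i-1}(y_i)\le 2r-1$, so $b\le 2r-1$, giving $-2b+2\ge -2(2r-1)+2 = -4r+4 = -2A+2$... this needs to be reconciled with the target bound, so the bookkeeping here is where I expect the real work to be.

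Putting the pieces together, with $m:=|I_v| = d - f_0(v)$ steps, I would get $\Delta_v = \sum_{i\in I_v}\Delta_i \ge \bigl(d^2 - f_0(v)^2\bigr) - \sum(2b-1) \ge d^2 - f_0(v)^2 - m\cdot(2r-1) + (\text{lower-order})$. Since $f_0(v)\le 3r-1$ and $A\le f_0(v)+\dots$, I would bound $f_0(v)^2 + m(2r-1)$ from above in terms of $Ad$ and a constant; the identity $A = 4r-3 = 2(2r-1)-1$ and $B=A+r(r-1)$ should make the constants line up to exactly $d^2 - Ad + (r-2)(3r-1)$. Concretely, the slack $f_0(v) - (2r-1)$ is at most $l-\dots$, and the $(r-2)(3r-1)$ term should emerge from how much $f_0$ was raised above $2r-1$ on the $r+1$ or so edges/vertices near $v$; I would verify this by writing $\Delta_v$ exactly and checking the inequality reduces to $f_0(v)^2 \le (A - (2r-1))d - \dots$, a routine but delicate estimate. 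The main obstacle is exactly this constant-chasing: ensuring the contribution of the decreased partners $y_i$ and the initial value $f_0(v)$ combine to leave precisely the stated additive constant $(r-2)(3r-1)$ rather than something weaker, which requires using \Cref{lem:transfer}(4) and the explicit bounds $2r-1\le f_0(v)\le 3r-1$ tightly rather than crudely.
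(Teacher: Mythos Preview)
Your approach is essentially the paper's: telescope the increase contributions at $v$ to $d^2 - f_0(v)^2$, and bound each paired decrease by the maximal possible square--loss using $f_0(y_i)=2r-1$. Two corrections turn your sketch into a complete proof. First, fix the arithmetic: with $b\le 2r-1$ the loss per decrease is $b^2-(b-1)^2=2b-1\le 4r-3=A$ (your ``$-4r+4=-2A+2$'' is wrong, since $A=4r-3$, and later you write $m(2r-1)$ where it should be $mA$). This already gives $\Delta_v \ge (d^2-x^2) - A(d-x)$ with $x:=f_0(v)$.

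Second, the constant--chase is much cleaner than you fear: rewrite $(d^2-x^2)-A(d-x)=d^2-Ad + x(A-x)$. The quadratic $x\mapsto x(A-x)$ has its vertex at $A/2=2r-\tfrac32<2r-1$, hence is decreasing on $[2r-1,\,3r-1]$, and its minimum there is $(3r-1)\bigl(A-(3r-1)\bigr)=(3r-1)(r-2)$. That is the entire finish. Your last paragraph, speculating about ``how much $f_0$ was raised above $2r-1$ on the $r+1$ or so edges/vertices near $v$'', is a red herring: no structural information about $H$ enters beyond the two scalar bounds $2r-1\le f_0(v)\le 3r-1$ and $b\le 2r-1$.
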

\begin{proof}
If $v\in L(H)$, then $v\in I$, so $v$ is never decreased. Let $x = f_0(v)$, thus $2r-1\leq x \leq 3r-1$.
Each time $f_{i-1}(v)$ increases from $t-1$ to $t$ (in $f_i(v)$), the square increases by $t^2-(t-1)^2=2t-1$. Summing over the $(d-x)$ increments from $x$ to $d$ gives $\sum_{t=x+1}^{d} (2t-1) = d^2-x^2$.

Each such increment step is paired with a decrease of some $y\in D$ (since the sum remains unchanged).
Vertices in $D$ start at $2r-1$ and are never increased, so the maximum possible square-loss
in any single decrease is $(2r-1)^2-(2r-2)^2=4r-3=A$.
Hence, the total paired loss over the $(d-x)$ increments is at most $A(d-x)$. Therefore, \begin{equation}\label{lemma6eqn1}
    \Delta_v \ge (d^2-x^2)-A(d-x)=d^2-Ad+(Ax-x^2)
\end{equation}
It is easy to check that the term $Ax-x^2=x(A-x)$ is minimized on $[2r-1,3r-1]$ at $x=3r-1$, giving 
\begin{equation}\label{lemma6eqn2}
    Ax-x^2 \ge (3r-1)\bigl((4r-3)-(3r-1)\bigr)=(3r-1)(r-2)
\end{equation} 
Thus, from \cref{lemma6eqn1,lemma6eqn2} we get the required bound.
\qed\end{proof}

\noindent\textbf{Step 7:} \textit{Global bound.}
\newline Let $T^*(H)=\sum_{e\in E(H)} s^*(e)$.

\begin{lemma}\label{lem:T*bound}
If $H$ is a minimal counterexample on $n$ vertices, then
\[
T^*(H) \geq n(2r-1)^2  + l(4r-1) +(r-2)(3r-1)|L(H)|
\]
\end{lemma}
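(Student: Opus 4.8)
The plan is to chain three estimates: pass from $T^*(H)$ to $T_k=\sum_{v}d(v)^2$ by controlling the total defect $\sum_{e}(s(e)-s^*(e))$; pass from $T_k$ to $T_0=\sum_{v}f_0(v)^2$ using the transfer sequence of \Cref{lem:transfer}; and finally bound $T_0$ from below by a convexity estimate. For the first step I would observe that every edge contains at most one large vertex: if $v\in L(H)$ and $v\in e$, then by \Cref{onelarge} every other vertex of $e$ has degree at most $r$, hence less than $A$ and outside $L(H)$. Therefore $s^*(e)<s(e)$ can occur only when $e$ meets $L(H)$, and each such edge is then charged to its unique large vertex. Combining $\sum_{e}s(e)=\sum_{v}d(v)^2$ with \Cref{lem:diff},
\[
T^*(H)=\sum_{v}d(v)^2-\sum_{e\in E(H)}\bigl(s(e)-s^*(e)\bigr)=T_k-\sum_{v\in L(H)}\ \sum_{e\in E_v(H)}\bigl(s(e)-s^*(e)\bigr)\ \ge\ T_k-\sum_{v\in L(H)}\bigl(d(v)^2-A\,d(v)\bigr).
\]

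For the second step the crucial claim is $\Delta_i\ge 0$ for every transfer step $i$. By item~3 of \Cref{lem:transfer} the increased vertex $x_i$ lies in $I$, so it is never decreased and $f_{i-1}(x_i)\ge f_0(x_i)\ge 2r-1$, giving a square gain of $2f_i(x_i)-1\ge 4r-1$; the decreased vertex $y_i$ lies in $D$, so it is never increased and $f_{i-1}(y_i)\le f_0(y_i)=2r-1$ (item~4), giving a square loss of $2f_{i-1}(y_i)-1\le 4r-3=A$. Hence $\Delta_i\ge 2>0$. Also $L(H)\subseteq I$ (a large vertex cannot start at $2r-1$ and only decrease), and the index sets $\{I_v:v\in L(H)\}$ are pairwise disjoint subsets of $[k]$ (two large vertices can never form the pair $\{x_i,y_i\}$ since $y_i\in D$). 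Dropping the non-negative $\Delta_i$ outside $\bigcup_{v\in L(H)}I_v$ and then applying \Cref{lem:delta},
\[
T_k=T_0+\sum_{i=1}^{k}\Delta_i\ \ge\ T_0+\sum_{v\in L(H)}\Delta_v\ \ge\ T_0+\sum_{v\in L(H)}\bigl(d(v)^2-A\,d(v)+(r-2)(3r-1)\bigr).
\]
Substituting this into the previous display, the $d(v)^2-A\,d(v)$ terms cancel and leave $T^*(H)\ge T_0+(r-2)(3r-1)\,|L(H)|$.

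For the last step I would write $f_0(v)=(2r-1)+g(v)$ with $g(v)\in\mathbb{N}$; item~1 of \Cref{lem:transfer} gives $\sum_{v}g(v)=l$, so
\[
T_0=n(2r-1)^2+2(2r-1)\,l+\sum_{v}g(v)^2\ \ge\ n(2r-1)^2+2(2r-1)\,l+l=n(2r-1)^2+(4r-1)\,l,
\]
using $g(v)^2\ge g(v)$ for nonnegative integers. Combining the two displays yields the claimed bound.

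I expect the main obstacle to be the $\Delta_i\ge 0$ step: it hinges on reading off from \Cref{lem:transfer} that an increased vertex never drops below its initial value $\ge 2r-1$ while a decreased vertex never rises above its initial value $2r-1$, so that the per-step square gain $\ge 4r-1$ always dominates the per-step square loss $\le 4r-3=A$; one also has to be careful that the $I_v$ for large $v$ are disjoint so that summing only over those indices is legitimate. The edge-charging argument in the first step and the convexity estimate in the last step are routine by comparison.
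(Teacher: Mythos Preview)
Your proof is correct and follows essentially the same three-step chain as the paper: charge the defect $\sum_e(s(e)-s^*(e))$ to large vertices via \Cref{onelarge} and \Cref{lem:diff}, telescope $T_k=T_0+\sum_i\Delta_i\ge T_0+\sum_{v\in L(H)}\Delta_v$ and cancel against \Cref{lem:delta}, then bound $T_0$ from below. Your version is actually a bit more careful than the paper's, since you explicitly verify $\Delta_i\ge 2>0$ (the paper just asserts $\sum_{v\in I}\Delta_v\ge\sum_{v\in L(H)}\Delta_v$) and your $g(v)^2\ge g(v)$ argument for $T_0$ is cleaner than the paper's ``as equal as possible'' phrasing, but both arrive at the identical bound $n(2r-1)^2+(4r-1)l$.
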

\begin{proof}
From \Cref{lem:transfer}, $T_k = \sum_{v}f_k(v)^2 = \sum_vd(v)^2=\sum_es(e)$.
Thus,
\begin{equation}\label{eqn1}
    T^*(H) = \sum_{e\in E(H)}s(e) - \sum_{e\in E(H)}(s(e) - s^*(e)) = T_k - \sum_{e\in E(H)}(s(e)-s^*(e))
\end{equation}
If an edge $e$ does not contain a large vertex, then by definition $s^*(e) = s(e)$. Thus the sum can be restricted to the edges containing at least one large vertex, but from \Cref{onelarge} we know that no edge can contain two large vertices. Thus,
\begin{equation}\label{eqn2}
    \sum_{e \in E(H)}(s(e) - s^*(e)) = \sum_{v\in L(H)}\sum_{e\in E_v(H)}(s(e)-s^*(e))
\end{equation}
Since, for $i \in [k]$, $\Delta_i = T_i - T_{i-1}$, by telescoping we get, $T_k = T_0 + \sum_{i =1}^k\Delta_i = T_0 + \sum_{v\in I}\Delta_v \geq T_0 + \sum_{v \in L(H)}\Delta_v$. Thus from \cref{eqn1,eqn2} we get,
\begin{equation}\label{eqn3}
    T^*(H) \geq T_0 + \sum_{v \in L(H)}\Delta_v - \sum_{v\in L(H)}\sum_{e\in E_v(H)}(s(e)-s^*(e))
\end{equation}
Note that $\sum_v f_0(v) = (2r-1)n + l$. Clearly, $T_0$ is minimized when for all $v \in V(H)$, $f_0(v)$ is as equal as possible. Therefore, \[T_0 \geq (n-l)(2r-1)^2 + l(2r)^2 = n(2r-1)^2 + l(4r-1)\]
Let $d(v) =d$, thus applying \Cref{lem:diff,lem:transfer,lem:delta} to \cref{eqn3} we get,
\begin{align*}
    T^*(H)&\geq T_0 + |L(H)|(d^2-Ad+(r-2)(3r-1)) - |L(H)|(d^2-Ad)\\
    &\geq  n(2r-1)^2 + l(4r-1) + (r-2)(3r-1)|L(H)|. \tag*{\qed}
\end{align*}
\end{proof}

\medskip

This combines all previous estimates to obtain a strong lower bound on the total truncated edge contribution.

\medskip
\noindent\textbf{Step 8:} \textit{Extracting a dense edge and resulting a contradiction.}
\newline Since $|E(H)| = \frac{n(2r-1)+l}{r}$ where $l > 0$. Thus from \Cref{lem:T*bound},
\begin{equation}\label{eqn4}
    \frac{T^*(H)}{|E(H)|} \geq \frac{(2r-1)^2n + l(4r-1)+(r-2)(3r-1)|L(H)|}{\frac{n(2r-1) + l}{r}}
\end{equation}
From \cref{eqn4} we get,
\begin{equation*}
     \frac{T^*(H)}{|E(H)|} > \frac{(2r-1)^2n + l(2r-1)}{\frac{n(2r-1) + l}{r}}= r(2r-1)
\end{equation*}
Since $T^*(H) = \sum_{e\in E(H)}s^*(e)$, we get that there exists an $e \in E(H)$ such that $s^*(e) \geq r(2r-1)+1$. Note that for $r \geq 3$, $(2r^2 - r +1) - B = (2r^2-r+1) - (r^2 + 3r - 3) = (r-2)^2 > 0$. Thus, $s^*(e) > B$ and therefore, $s^*(e) = s(e) =\sum_{v \in e}d(v)$. Since, $s(e) > B$, $e$ does not contain any large vertex.

By averaging argument, there exists $x \in e$ such that $d(x) \geq 2r$. Suppose for all $y \in e \setminus \{x\}$, $d(y) \leq r$. Since $e$ has no large vertex, $d(x) \leq 4r - 4$. Thus, $s(e) \leq (4r-4) + (r-1)r = r^2 +3r -4$. But $s(e) =s^*(e) \geq 2r^2 -r +1$. Thus, 
\begin{equation*}
    r^2 +3r -4 \geq 2r^2 -r +1
    \implies 0\geq r^2-4r+5 = (r-2)^2+1
\end{equation*}
Thus, we get a contradiction. Therefore, there exists $y \in e\setminus \{x\}$ such that $d(y) \geq r+1$. Now since $\delta(H) \geq 2$, $d(z) \geq  2$ for $z \in e \setminus\{x,y\}$. Thus, by \Cref{crownlem}, $H$ contains a crown, which is a contradiction. 

\section{Proof of \Cref{P4thm}}

Let $n = qr^2$ and partition the $n$-vertex set into $q$ disjoint parts
$V(H) = V_1 \sqcup V_2 \sqcup \cdots \sqcup V_q$,
such that $|V_i|=r^2$.
Suppose each $V_i$ induces $H_i$, a component of $H$, and each $H_i$ is a copy of $S(2,r,r^2)$.

Since each $H_i$ is linear and $r$-uniform by definition, and edges of disjoint components are disjoint, we have $H$ to be linear and $r$-uniform.

By \Cref{steinercount}, we have
\[|E(H)| = \sum_{i=1}^q |E(H_i)|
= \frac{n}{r^2}\cdot \frac{r^2(r^2-1)}{r(r-1)}
= \frac{(r+1)n}{r}.\]
Suppose there exists a $P^r_4$ in $H$, clearly it must be in some $H_i$. Let $e_1,e_2, e_3$ and $e_4 $ be the edges of $P^r_4$, such that $e_1 \cap e_3= \emptyset$, $e_1 \cap e_4= \emptyset$ and $v\in e_3\cap e_4$. Note that $v\notin e_1$ and we get two edges $e_3$ and $e_4$ containing the vertex $v$ that are disjoint from $e_1$. This contradicts \Cref{lem:steiner}.

Therefore, $H$ is a $P^r_4$-free graph with $|E(H)|  = \frac{(r+1)n}{r}$. Thus, we have $ex_r^{\mathrm{lin}}(n,P_4^r) \geq \frac{(r+1)n}{r}$. It is easy to check that for $q=1$, the bound is strict, since $S(2,r,r^2)$ maximizes the number of edges.

\section*{Conclusion and Future Directions}

In this paper, we studied linear Tur\'{a}n numbers for $r$-uniform linear hypertrees. 
We provided a general construction giving the lower bound $ex_r^{\mathrm{lin}}(n,T_k^r)\ge \frac{n(k-1)}{r}$ under mild assumptions, and showed that this bound is sharp for stars. 
For hypertrees on four edges, we determined the exact extremal value for $B_4^r$, proved an upper bound for the crown $E_4^r$, and established a lower bound for $P_4^r$.

We conclude by proposing a conjectural sharp upper bound for $P_4^r$, matching the lower bound we provided in \Cref{P4thm}.
\begin{conjecture}
    \emph{$ex_r^{\mathrm{lin}}(n,P_4^r) \leq \dfrac{(r+1)n}{r}$. Equality holds if and only if the linear $r$-uniform hypergraph is the union of disjoint Steiner systems $S(2, r, r^2)$, given that the Steiner system $S(2,r,r^2)$ exists.}
\end{conjecture}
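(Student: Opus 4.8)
The plan is to adapt the minimal-counterexample framework behind \Cref{b4thm,E4thm}. Suppose $H$ is a $P_4^r$-free linear $r$-uniform hypergraph with $|E(H)|>\frac{(r+1)n}{r}$ and the fewest hyperedges; exactly as in \Cref{ineqb4}, deleting an isolated or degree-one vertex gives a strictly smaller counterexample, so $\delta(H)\ge 2$, and a component with too many edges would again be a counterexample, so $H$ is connected. From $\sum_v d(v)=r|E(H)|>(r+1)n$ we obtain a vertex $v$ with $k:=d(v)\ge r+2$, hence an expanded star $S_k^r$ with edges $e_1,\dots,e_k$ through $v$. Fix a leaf $u\in e_1\setminus\{v\}$ and, using $\delta(H)\ge 2$, an edge $f\neq e_1$ through $u$; by linearity $v\notin f$ and $f$ meets at most $r$ of the star edges, so at least two star edges are disjoint from $f$, and for any such $e_j$ the edges $f,e_1,e_j$ already form a copy of $P_3^r$ (with $f\cap e_1=\{u\}$, $e_1\cap e_j=\{v\}$). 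Everything reduces to appending one more edge, and this is where the argument becomes subtle.

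Set $X=\{v\}\cup\bigcup_i(e_i\setminus\{v\})$. To extend the $P_3^r$ at the $f$-end it suffices to find $z\in f\setminus X$ and an edge $h\neq f$ through $z$ that is disjoint from $e_i\cup e_j$, for some star edge $e_i$ met by $f$ and some star edge $e_j$ disjoint from $f$: then $h,f,e_i,e_j$ is a $P_4^r$. Since $v\notin h$ (otherwise $h$ would be a star edge containing $z\notin X$), each candidate $h$ meets at most $r-1$ star edges. When $f$ meets exactly one star edge $e_1$ one can check that $P_4^r$-freeness forces \emph{every} $h\neq f$ through a vertex of $f\setminus X$ to meet $e_1$; whether this produces a contradiction depends on a local degree balance between $e_1\setminus\{v\}$ and $f\setminus X$, which always holds when $H$ is regular but can fail in general, and when $f$ meets two or more star edges even this partial conclusion collapses. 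We therefore expect a correct proof to choose the starting configuration by weight rather than by degree: following \Cref{E4thm}, weight each edge by $s(e)=\sum_{w\in e}d(w)$, use the surplus $|E(H)|>\frac{(r+1)n}{r}$ together with a variant of \Cref{lem:T*bound} to produce an edge $e^{\ast}$ carrying a vertex $a$ of large degree, a vertex $b$ of medium degree, and a third vertex of degree $\ge 2$, and then grow a path through $e^{\ast}$ in both directions by a two-sided analogue of \Cref{crownlem} (centred at the vertex $e_2\cap e_3$ of $P_4$), using $a$ and $b$ to force the two pendant extensions.

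The hard part — and the reason we record this only as a conjecture — is that the weight surplus available here is far slimmer than in the crown problem (target density $\frac{(r+1)n}{r}$ rather than $\frac{(2r-1)n}{r}$), so the thresholds $A,B$ of the discharging argument cannot be chosen with the same slack, and a naive transplant of \Cref{lem:T*bound} does not deliver the required triple of vertices on one edge. Closing this gap, or else treating the range where $\Delta(H)$ is only slightly above $r+1$ (where the numbers of star edges disjoint from $f$, of vertices of $f$ outside $X$, and of free slots on an extending edge are all simultaneously small) by a separate structural argument, is the missing ingredient.

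For the extremal characterization, assume the bound and $|E(H)|=\frac{(r+1)n}{r}$, with $H$ connected. The averaging argument forces $H$ to be $(r+1)$-regular: a vertex of degree $\ge r+2$ would again yield a $P_4^r$ via the extension step. Fix $v$, let $e_1,\dots,e_{r+1}$ be its edges and $X$ their union, so $|X|=1+(r+1)(r-1)=r^2$. If $V(H)\neq X$, connectivity gives an edge $f$ meeting $X$ with a vertex outside $X$; then $f$ meets at most $r-1$ star edges, and — once the extension step is in hand, now with all degrees equal to $r+1$ — this produces a $P_4^r$, a contradiction. Hence $V(H)=X$, so the component has $r^2$ vertices and $\frac{(r+1)r^2}{r}=r(r+1)=\binom{r^2}{2}/\binom{r}{2}$ edges; by linearity every pair of vertices then lies in exactly one edge, i.e.\ the component is $S(2,r,r^2)$. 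The converse is already contained in the proof of \Cref{P4thm}: a $P_4^r$ is connected, hence would lie in one copy of $S(2,r,r^2)$, and by \Cref{lem:steiner} no vertex outside an edge $e$ of that system lies on two edges disjoint from $e$ — precisely the configuration a $P_4^r$ forces at its far end.
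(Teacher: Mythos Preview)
This statement is a \emph{conjecture} in the paper; the authors offer no proof whatsoever, only the matching lower bound of \Cref{P4thm} and the remark that the corresponding upper bound is open. Your write-up is consistent with this: you explicitly say ``the reason we record this only as a conjecture'' and describe candidate strategies together with the obstacles that block them. So there is no paper proof to compare against, and your proposal is not a proof either---nor does it claim to be.

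That said, your diagnosis of \emph{why} the $B_4^r$ and $E_4^r$ arguments do not transplant is reasonable. The minimal-counterexample reduction to $\delta(H)\ge 2$, connectedness, and $\Delta(H)\ge r+2$ all go through verbatim, and you correctly observe that a three-edge path $f,e_1,e_j$ can always be found off a high-degree star. The genuine obstruction you isolate---that appending a fourth edge at the far end of this $P_3^r$ requires avoiding two specified star edges simultaneously, and the available degree surplus (target $\frac{(r+1)n}{r}$ rather than $\frac{(2r-1)n}{r}$) is too thin to force this via the weighted-edge/discharging machinery of \Cref{E4thm}---is exactly the difficulty. Your conditional extremal characterization (assuming the inequality, deducing $(r+1)$-regularity, $|V|=r^2$, and hence $S(2,r,r^2)$) parallels the $B_4^r$ characterization in \S4.2 and would indeed go through once the inequality is established, but it too hinges on the same missing ``extension step'' to rule out $\Delta(H)\ge r+2$ under equality.

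In short: the paper proves nothing here, and neither do you; your sketch is an honest account of where the natural approaches stall.
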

A natural direction for future work is to determine a sharp upper bound for $E_4^r$.

Another direction that can be of interest is to obtain sharp bounds for $P_k^r$, as the known bound,
$ex_r^{\mathrm{lin}}(n,P_k^{r}) \le \frac{(2r-3)kn}{2}$ from~\cite{zhou2025turan}, can be far from optimal, and we already have sharp bound for paths in the graph setting (check \cite{gallai1959maximal}).

\bibliographystyle{plain}
\bibliography{references}

\end{document}